\newtheorem{theorem}{Theorem}[section]
\newtheorem{lemma}[theorem]{Lemma}
\newtheorem{corollary}[theorem]{Corollary}
\theoremstyle{remark}
\newtheorem{example}[theorem]{Example}
\newtheorem{remark}[theorem]{Remark}
\theoremstyle{definition}
\newtheorem{definition}[theorem]{Definition}
\newcommand{\abs}[1]{\ensuremath{\left\vert#1\right\vert}}
\newcommand{\kla}[1]{\ensuremath{\left(#1\right)}}
\newcommand{\menge}[1]{\ensuremath{\left\lbrace#1\right\rbrace}}
\newcommand{\scalar}[1]{\ensuremath{\left<#1\right>}}
\newcommand{\norm}[1]{\ensuremath{\left\|#1\right\|}}
\newcommand{\N}{\ensuremath{\mathbb{N}}}
\newcommand{\Z}{\ensuremath{\mathbb{Z}}}
\newcommand{\R}{\ensuremath{\mathbb{R}}}
\newcommand{\C}{\ensuremath{\mathbb{C}}}
\newcommand{\T}{\ensuremath{\mathbb{T}}}
\title{On the numerical radius of weighted shifts on $\ell^{2}$ as a norm on $\ell^{\infty}$ and connections to Banach limits}
\author{Akram Sharif}
\address{TU Dresden, D-01062 Dresden, Germany \& Mathematisches Institut, Universität Leipzig, D-04109 Leipzig, Germany}
\keywords{Numerical range, Numerical radius, Unilateral weighted shift, Banach limits}
\email{akram.sharif@tu-dresden.de, akram.sharif@math.uni-leipzig.de}
\subjclass[2020]{ 15A60, 47A12, 47B37 }
\begin{document}
	
	\maketitle
	\begin{abstract}
		We investigate the numerical radius of weighted shifts on $\ell^{2}$ as a norm on $\ell^{\infty}$. Along the way, we prove that the largest value a real-valued Banach limit can attain on the sequence of absolute values of a bounded sequences is larger or equal to the spectral radius and smaller or equal to the numerical radius of the corresponding weighted shift. Further, we compare the operator norms on $\ell^{\infty}$ with respect to the uniform norm and the numerical radius of weighted shifts as a norm. We show that they generally differ, but for multiplication operators and weighted shifts on $\ell^{\infty}$, they are both equal to the uniform norm of the corresponding weight. Moreover, we prove that all complex-valued Banach limits satisfy the norm inequality with respect to the numerical radius of weighted shifts and provide an application of our results to the theory of Banach limits.
	\end{abstract}
	
	\section{Introduction and Preliminaries}
	
	Weighted shifts are well studied in the literature as they are classical examples of bounded linear operators on Hilbert and Banach spaces, and due to their structural and spectral properties important examples in functional analysis \cite{halmos} and dynamical systems \cite{salas}.  Additionally, they are closely related to numerical analysis and information theory \cite{chen}. The spectral properties of weighted shifts are intensively studied in the literature, as seen in works like \cite{kelley}, \cite{ridge}. Moreover, the numerical range and radius of weighted shifts has been under intensive investigation \cite{chien1, chien2, chien4, ridge2, shields, stout, chien3, WangWu2011}, as the numerical range of weighted shifts is, as well as the spectrum, always a disk centered about the origin. However, determining if the disk is open or closed, as well as computing its radius, is generally non-trivial (for further references, see our list below).
	
	The purpose of this paper is to view the numerical radius of weighted shifts on $\ell^{2}$ as a norm on $\ell^{\infty}$ and investigate among other things it’s relation to the uniform norm and it's connection to the area of Banach limits, i.e. normalized, shift-invariant, norm-preserving linear functionals, which extend the limit functional from the space of convergent sequences to $\ell^{\infty}$. One of our main results is that for any complex-valued bounded sequence $a\in\ell^{\infty}$, we have 
	\begin{align}
		\label{eq: sharpened connection intro}
		r(T_{a}) \leq \lim_{n\to\infty} \sup_{j\in\N} \frac{1}{n} \sum_{k=1}^{n} \abs{a_{k+j-1}} \leq w(T_{a}),
	\end{align}
	where $r(T_{a})$ is the spectral and $w(T_{a})$ is the numerical radius of the weighted shift $T_{a}$ corresponding to the weight $a$, and the expression in between is according to Sucheston \cite{Sucheston} the largest value a real-valued Banach limit can attain on the sequence of absolute values of a bounded sequences. 
	
	We denote by $\mathcal{B}(X)$ the algebra of all bounded linear operators on a complex Banach space $X$. Now let $H$ be a complex Hilbert space. Then the numerical range of $T\in\mathcal{B}(H)$ is defined to be the set
	\begin{align*}
		W(T) := \menge{\scalar{Tx,x}\,:\,x\in H,\,\norm{x}=1}.
	\end{align*}
	It is well known that $W(T)$ is bounded and convex (cf. \cite[Theorem 1.1-2]{GustafsonRao1997}). The numerical radius of $T$ is defined as
	\begin{align*}
		w(T) := \sup\menge{\abs{\scalar{Tx,x}}\,:\,x\in H,\,\norm{x}=1} = \sup\menge{\abs{\lambda}\,:\,\lambda\in W(T)}.
	\end{align*}
	Moreover one has the following well known properties for the numerical range and radius:
	\begin{enumerate}
		\item[1.] $w(T)$ is a norm on $\mathcal{B}(H)$ and for each $T\in\mathcal{B}(H)$ we have \begin{align}
			\label{ineq: equiv of op norms}
			w(T)\leq \norm{T}\leq 2w(T)
		\end{align}
		And if $T$ is hyponormal, that is if $T^{*}T-TT^{*}$ is positive definite, then $w(T) = \norm{T}$.
		\item[2.] $\sigma(T)\subseteq\overline{W(T)}$
		\item[3.] $W(T)$ is compact, if $T$ is compact
		\item[4.] $W(U^{*}TU) = W(T)$ for all unitary operators $U\in\mathcal{B}(H)$
		\item[5.] $W(\Re(T)) = \menge{\mathrm{Re}(z)\,:\,z\in W(T)}$, where $\Re(T) = \frac{1}{2}(T+T^{*})$ is the hermitian real part of $T$
	\end{enumerate}
	
	For references, proofs and more context related to the numerical range we refer to \cite[Chapters 1.1-1.5 and Chapter 6.2]{GustafsonRao1997}.
	
	\begin{definition}
		Let $T\in\mathcal{B}(H)$. Then $T$ is called 
		\begin{enumerate}
			\item[1.] \textit{normaloid}, if $w(T) = \norm{T}$.
			\item[2.] \textit{convexoid}, if $\overline{W(T)} = \mathrm{conv} \sigma(T)$.
		\end{enumerate}
	\end{definition}
	Note that we have the following implications for $T\in\mathcal{B}(H)$ \cite[Chapter 6.1 - 6.2]{GustafsonRao1997}
	\begin{align}
		\label{impl normaloid}
		T\mbox{ hyponormal}  \quad \Rightarrow \quad \forall \lambda\in\C:\,\,\,T-\lambda I \mbox{ normaloid} \quad \Rightarrow  \quad T \mbox{ convexoid} 
	\end{align}
	
	In what follows we let $\omega$ be the vector space of all complex-valued sequences, $\ell^{2}$ be the complex Hilbert space of square summable unilateral sequences and $\ell^{\infty}$ the complex Banach space of bounded sequences respectively.	If $a,b\in\ell^{\infty}$, then we denote by $ab$ the product sequence $ab := \kla{a_{k}b_{k}} \in\ell^{\infty}$. Moreover, we set $\mathbb{1}:=(1,1,...)\in\ell^{\infty}$, and for any sequence $a=\kla{a_{k}}\in\ell^{\infty}$, we denote by $\abs{a}$ the sequence of its absolute values, i.e. $\abs{a} := \kla{\abs{a_{k}}}\in\ell^{\infty}$. To ease our exposition, we also introduce the notation $a\leq b$ for any two real-valued sequences $a,b\in\ell^{\infty}$ that satisfy $a_{k} \leq b_{k}$ for all $k\in\N$. Finally, for a periodic sequence $a\in\ell^{\infty}$ with period $N\in\N$, we clearly have $a\in\ell^{\infty}$ with $\norm{a}_{\infty} = \max\menge{\abs{a_{1}},..., \abs{a_{N}}}$ and we denote the arithmetic mean of $a$ by   
	\begin{align*}
		\overline{a} := \frac{1}{N}\sum_{k=1}^{N} a_{k}.
	\end{align*}

	\begin{definition}
		Let $a=(a_{k})\in\omega$. Then, we define by 
		\begin{align*}
			T_{a}:\omega\to\omega,\,\kla{x_{k}}\mapsto\kla{0,a_{1}x_{1},a_{2}x_{2},...}
		\end{align*}
		the \textit{(unilateral) weighted (forward) shift} associated to the sequence $a$. If $a$ is periodic of period $N\in\N$ we sometimes also write $T(a_{1},...,a_{N})$ instead of $T_{a}$.  
		\\Further we simple denote by $T:\ell^{2}\rightarrow \ell^{2}$ the (unilateral) forward-shift operator and by $S:\ell^{2}\to\ell^{2}$ the (unilateral) backward-shift.
	\end{definition}
	
	In this paper we will only focus on unilateral forward shifts. However results we mention are partially transferable to weighted backward shifts and into the bilateral setting. For connections to weighted shift matrices we refer to \cite{chien3} and \cite{WangWu2011} as well as the references therein.
	
	Let us list some well-known facts about weighted shifts, their spectrum as well as their numerical range and radius: 
	\begin{enumerate}
		\item[1.]  $T_{a}\in\mathcal{B}\kla{\ell^{2}}$ if and only if $a\in\ell^{\infty}$. Hence we assume from now on always $a\in\ell^{\infty}$.
		\item[2.]  $\norm{T_{a}} = \norm{a}_{\infty}$ and more generally for $n\in\N$ we have
		\begin{align*}
			\norm{T_{a}^{n}} = \sup_{j\in\N} \,\prod_{k=1}^{n} \abs{a_{k+j-1}}.
		\end{align*}
		\item[3.] Kelley \cite{kelley} proved that $\sigma(T_{a})$ is a closed circular disk about the origin, with spectral radius
		\begin{align}
			\label{eq: spectral radius}
			r\kla{T_{a}}= \lim_{n\to\infty} \norm{T_{a}^{n}}^{\frac{1}{n}}  = \lim_{n\to\infty}\sup_{j\in\N}\kla{\prod_{k=1}^{n}\abs{a_{k+j-1}}}^{\frac{1}{n}}.
		\end{align}
		
		\item[4.]  Shields \cite{shields} has shown that two weighted shifts $T_{a}$, $T_{b}$ are unitarily equivalent if and only if $\abs{a} = \abs{b}$, and in particular for any $\theta\in\R$ a weigthed shift $T_{a}$ is unitarily equivalent to $e^{i\theta} T_{a}$. This shows that it suffices to consider shifts with real non-negative weights and by the invariance of the numerical range under unitary conjugation implies that the numerical range  the numerical range of weighted shifts is a disk about the origin. So the analysis of the numerical range boils down to the following problems 
		\begin{enumerate}
			\item[(P1)] Determine whether $W(T_{a})$ an open or closed disk
			\item[(P2)] Find the exact value of $w(T_{a})$
		\end{enumerate} 
		
		\item[5.] Berger and Stampfli \cite{BergerStampfli1967} computed the numerical range and radius of a weighted shift with weights $(r,1,1,1,...)$ for $r\geq0$. They showed that 
		\begin{align*}
			w(T_{(r,1,1,…)}) = \begin{cases}
				1, & \mbox{if }0\leq r\leq \sqrt{2},\\
				\frac{1}{2}\kla{\kla{r^2 - 1}^{\frac{1}{2}} + \kla{r^2-1}^{-\frac{1}{2}}}, & \mbox{if } \sqrt{2} < r.
			\end{cases} 
		\end{align*} 
		
		\item[6.] Eckstein and Rasz \cite{eckstein} characterized by necessary and sufficient conditions all weights $a\in\ell^{\infty}$, which define numerical radius contractions, i.e. satisfy $w(T_{a})\leq 1$.
		
		\item[7.] Ridge \cite[Theorem 1]{ridge} proved that for $a\in\ell^{\infty}$ periodic of Period $N\in\N$ one has
		\begin{align}
			\label{eq: Ridge}
			w(T_{a}) = \max\menge{\abs{a_{1}}x_{1}x_{2} + ... + \abs{a_{N}}x_{N}x_{1}\,:\,x\in\R^{N},\,\norm{x}_{2}^{2} = 1}
		\end{align}
		and conjectured in the same paper that $W(T_{a})$ is open, if all entries of $a$ are non-zero. 
		
		\item[8.] Stout \cite{stout} proved the conjecture of Ridge and observed further that it is possible for weighted shifts of Hilbert-Schmidt class, i.e. $a\in\ell^{2}$, to replace the variational problem 
		\begin{align*}
			w(T_{a})  & = \sup\menge{\abs{\scalar{T_{a}x,x}}\,:\,x\in\ell^{2},\,\norm{x}_{\ell^{2}}=1} \\
			& = \sup\menge{\abs{\sum_{k=1}^{\infty} a_{k}x_{k}x_{k+1}^{*}}\,:\;x\in\ell^{2},\,\sum_{k=1}^{\infty} \abs{x_{k}}^{2}=1}.
		\end{align*} 
		by an algebraical problem. To be more precise, consider for a moment without loss of generality any non-negative $a\in\ell^{\infty}$. Since the numerical range of a weighted shift is a disk about the origin, one finds the equality $w(T_{a}) = w\kla{\Re(T_{a})}$. Notice that $\Re(T_{a})$ is a normal operator, so one has $w\kla{\Re\kla{T_{a}}} = \norm{\Re\kla{T_{a}}}$. Now if one additionally assumes $a\in\ell^{2}$, it is possible to show that $\norm{\Re\kla{T_{a}}} = \lambda^{-1}$, where $\lambda$ is the minimal positive root of the analytic function
		\begin{align*}
			F_{a}(z) = \det\kla{I-z\Re\kla{T_{a}}},
		\end{align*}
		which can be expressed explicitly as in \cite[Theorem 3, p. 497]{stout} (or see for a summarization \cite[p. 945]{chien3}).
	\end{enumerate}
	
	As the previous list suggests it is fairly complicated to give answers to problems (P1) and (P2) for general weights. However, there are numerous contributions addressing these problems, providing solutions, characterizations or partial answers for certain weights:
	\begin{enumerate}
		\item Chien and Nakazato \cite{chien1} computed an upper and lower bound for the numerical radius of weighted shifts with geometric weights $(1,r,r^{2},...)$ for $0<r<1$ and showed that the numerical range of the associated weighted shift is closed
		\item Wang and Wu \cite{WangWu2011} contributed among other things Perron-Frobenius type results for the numerical radius of weighted shifts, a new proof and refinements of the Eckstein-Rasz Theorem in order to give some answers to problem (P1) and (P2)
		\item Computations, estimates and characterizations of radii of weighted shifts with weights such as $(r,1,1,…)$, $(1,r,1,1,…)$, $(1,…,1,r,1,1,…)$ with $r$ at the $m$-th position, $(r,r^{2},r^{3},…)$ as well as $(h,k,a,b,a,b,...)$ and more generally $(h_{1},...,h_{m},a,b,a,b...)$ are treated in \cite{cob1, cob2, chien1, chien2, chien4, Lasser/Obermaier, tam, UndarkhVandanjav2014, chien3, WangWu2011}.
	\end{enumerate} 
	
	Next, we want to outline the role of the numerical radius of weighted shifts as a norm on $\ell^{\infty}$. More precisely note that
	\begin{align*}
		\Phi:\ell^{\infty}\rightarrow \mathcal{B}(\ell^{2}),\,a\mapsto T_{a}
	\end{align*}
	is a linear isometry, which allows us to identify $\ell^{\infty}$ with the image of $\Phi$ and set $\norm{a}_{w} := w(T_{a}) = w(\Phi(a))$ for all $a\in\ell^{\infty}$. Thus the map $\norm{\cdot}_{w}:\ell^{\infty}\to \left[0,\infty\right)$ is a norm on $\ell^{\infty}$ and by inequality \eqref{ineq: equiv of op norms} we obtain for all $a\in\ell^{\infty}$ 
	\begin{align}
		\label{ineq: equiv of norms}
		\norm{a}_{w} \leq \norm{T_{a}} = \norm{a}_{\infty} \leq 2\norm{a}_{w}.
	\end{align}
	Hence $\norm{\cdot}_{w}$ is equivalent to the uniform norm on $\ell^{\infty}$. 
	\begin{definition}
		A norm $\norm{\cdot}\colon\ell^{\infty}\to[0,\infty)$ is called 
		\begin{enumerate}
			\item[1.] \textit{absolute}, if
			\begin{align*}
				\forall a,b\in\ell^{\infty}:\qquad \abs{a} = \abs{b} \quad \Rightarrow \norm{a}_{\infty} = \norm{b}_{\infty}.
			\end{align*}
			
			\item[2.] \textit{monotonic}, if 
			\begin{align*}
				\forall a,b\in\ell^{\infty}:\qquad \abs{a} \leq \abs{b} \quad \Rightarrow \norm{a}_{\infty} \leq \norm{b}_{\infty}.
			\end{align*}
		\end{enumerate}
	\end{definition}
	
	Note that on $\ell^{\infty}$ any monotonic norm is absolute. The opposite is also true in finite dimensions, i.e. on $\C^{n}$ any norm is monotonic if and only if it is absolute. The author does not know if this is true on $\ell^{\infty}$.
	
	Obviously the uniform norm is absolute and monotonic. Further by Shields \cite{shields} result it is also clear that $\norm{\cdot}_{w}$ is absolute. Using this fact it is easy to show that $\norm{\cdot}_{w}$ is monotonic \cite[Prop. 2.5 (a)]{WangWu2011}. Wang and Wu further proved \cite[Lemma 4.4 (c)]{WangWu2011} that for periodic sequences $a,b\in\ell^{\infty}$ with the same period $N\in\N$ and such that there exists a number $n_{0}\in\N$ with $\abs{a_{n_{0}}}< \abs{b_{n_{0}}}$ one has
	\begin{align*}
		\norm{a}_{w}< \norm{b}_{w},
	\end{align*}
	which is clearly a property that the uniform norm also has.
	
	\begin{definition}
		Let $R:\ell^{\infty}\rightarrow\ell^{\infty}$ be linear. Then we define the operator norm of $R$ with respect to $\norm{\cdot}_{w}$ as
		\begin{align*}
			\norm{R}_{w} := \inf\menge{c>0\,:\,\norm{Ra}_{w} \leq c\cdot \norm{a}_{w},\,\,\forall a\in\ell^{\infty}} = \sup_{\norm{a}_{w} = 1} \norm{Ra}_{w}.
		\end{align*}
		Moreover we define for a linear map $\phi:\ell^{\infty}\rightarrow \C$ the operator norm of $\phi$ with respect to $\norm{\cdot}_{w}$ as
		\begin{align*}
			\norm{\phi}_{w} := \sup_{\norm{a}_{w}=1} \abs{\phi(a)} = \sup_{\norm{a}_{w}\neq 0} \frac{\abs{\phi(a)}}{\norm{a}_{w}}
		\end{align*}
	\end{definition}
	
	As a direct consequence of this definition and \eqref{ineq: equiv of norms}, i.e. the equivalence of the uniform norm and $\norm{\cdot}_{w}$, we conclude
	\begin{enumerate}
		\item[1.] for all $R\in\mathcal{B}\kla{\ell^{\infty}}$ we have
		\begin{align}
			\label{ineq: op norm}
			\frac{\norm{R}}{2} \leq \norm{R}_{w} \leq 2\norm{R}
		\end{align}
		\item[2.] for all $\phi\in\kla{\ell^{\infty}}^{*}$
		\begin{align}
			\label{ineq: op norm 2}
			\norm{\phi}\leq \norm{\phi}_{w}\leq 2\norm{\phi}.
		\end{align}
	\end{enumerate}
	
	\begin{remark} Let $R\in\ell^{\infty}$. If we equip $\ell^{\infty}$ with mixed norms, we define the operator norm of $R$ as  
		\begin{align*}
			\norm{R}_{\infty,w} := \sup_{\norm{a}_{\infty}=1} \norm{Ra}_{w},\quad   \norm{R}_{w,\infty} := \sup_{\norm{a}_{w}=1} \norm{Ra}_{\infty}.
		\end{align*}
		In this case we conclude 
		\begin{align*}
			\frac{\norm{R}}{2} \leq \norm{R}_{\infty,w} \leq \norm{R}_{w} \leq \norm{R}_{w,\infty} \leq 2\norm{R},
		\end{align*}
		since we have for any $a\in\ell^{\infty}$
		\begin{align*}
			\norm{Ra}_{w} \leq  \norm{R}_{w} \norm{a}_{w} \leq \norm{R}_{w} \norm{a}_{\infty}, \quad  \frac{1}{2}\norm{Ra}_{\infty} \leq \norm{Ra}_{w}
		\end{align*}
		and
		\begin{align*}
			\norm{Ra}_{\infty} \leq \norm{R}\norm{a}_{\infty} \leq 2\norm{R} \norm{a}_{w},\quad \norm{Ra}_{w} \leq \norm{Ra}_{\infty}.
		\end{align*}
	\end{remark}
	
	Of course one can also adjust the idea of using the numerical radius of weighted shifts as a norm, to define a norm on $\C^{N}$ as the following example shows.
	
	\begin{example}
		Let $N\in\N$. For any $a\in \C^{N}$ we define $\norm{a}_{w,N} := w(T(a_{1},...,a_{N}))$. Then $\norm{\cdot}_{w,N}$ is a norm on $\C^{N}$. It is then obviously equivalent to the uniform norm, since all norms on $\C^{n}$ are equivalent and we have
		\begin{align*}
			\norm{a}_{w,N} \leq \max\menge{\abs{a_{1}},...,\abs{a_{N}}}\leq 2\norm{a}_{w,N},\quad \forall a\in \C^{N}. 
		\end{align*}
		Note that we have for $N=1$ and $a\in \C$
		\begin{align*}
			\norm{a}_{w,1} = \abs{a}
		\end{align*}
		and for $N=2$ and $a=(a_{1},a_{2})\in\C^{2}$
		\begin{align*}
			\norm{a}_{w,2} = \frac{\abs{a_{1}}+\abs{a_{2}}}{2}.
		\end{align*}
		For arbitrary $N\in\N$, one may use the result of Ridge, i.e. equality \eqref{eq: Ridge}, to conclude for any $a=(a_{1},...,a_{N})\in\C^{N}$ 
		\begin{align*}
			\norm{a}_{w,N} = \max\menge{\abs{a_{1}}x_{1}x_{2} + ... + \abs{a_{N}}x_{N}x_{1}\,:\,x\in\R^{N},\,\norm{x}_{2}^{2} = 1}.
		\end{align*}
	\end{example}
	
	The rest of this work is organized as follows. In Section 2 we prove that the largest value a Banach limit can attain on a bounded sequence is a lower bound for $\norm{\cdot}_{w}$ and conclude \eqref{eq: sharpened connection intro} from this. Further, we specialize the bound for certain weights, e.g. Cesàro convergent weights. We then give some examples on how these bounds may give some useful information and show that the inequalities in \eqref{ineq: op norm} and \eqref{ineq: op norm 2} can be strict and the constants $\frac{1}{2}$ and $2$ are optimal (see Examples \ref{ex: norm of op} and Example \ref{ex: projection}). Further we generalize a result by Tam \cite[Theorem 1 (a)]{tam} and also show that non-constant periodic shifts are non-convexoid. Section 3 deals with the operator norm of multiplication operators and weighted shifts on $\ell^{\infty}$ with respect to $\norm{\cdot}_{w}$. In Section 4 we prove that all complex-valued Banach limits satisfy $\norm{L}_{w} = 1$ by combining Sucheston's Theorem \cite[Theorem, p. 309]{Sucheston} on the maximal and minimal value of real valued Banach limits on a sequence and the lower bound from Section 2. Further, we discuss one application of the connection between the maximal value Banach limit can attain on the sequence of absolute values of a bounded sequence and the spectral and numerical radius given in \eqref{eq: sharpened connection}. Finally, Section 5 is devoted to give sufficient conditions for other norms on $\ell^{\infty}$ to also have the property, that all complex-valued Banach limits have operator norm equal to 1 with respect to the norms.
	\\\\\textbf{Acknowledgement:}  The author sincerely thanks his diploma supervisor Agnes Radl for introducing him to the numerical range of bounded operators, which laid the the foundation for this work. Further, the author would like to express his sincere gratitude to Alexandr Usachev for his interest in this work and bringing the paper of Sucheston to his attention, which helped not only solving a question from an earlier version of this work, but also improved the main results of this paper.
	Moreover, the author thanks Martin Tautenhahn for useful discussions.
	Lastly, the author would like to thank the Hausdorff Institute for Mathematics in Bonn (Germany), for providing an outstanding research environment during his visit. This research was partially funded by the Deutsche Forschungsgemeinschaft (DFG, German Research Foundation) under Germany's Excellence Strategy – EXC-2047/1 – 390685813.

	\section{An estimate from below and consequences}
	
	As we pointed out in the introduction, it is quite difficult to compute the numerical radius of weighted shifts for an arbitrary weight. To the best knowledge of the author there are also no general estimates from below known in the literature for arbitrary weights. The next theorem provides such a general estimate from below, by using ideas for Banach limits from Sucheston \cite{Sucheston}.
	
	\begin{theorem}
		\label{th: estimates}
		Let $a\in\ell^{\infty}$. Then 
		\begin{align}
			\label{eq: estimate any seq}
			\lim_{n\to\infty}\sup_{j\in\N}  \frac{1}{n} \sum_{k=1}^{n} \abs{a_{k+j-1}} \leq \norm{a}_{w}.
		\end{align}
	\end{theorem}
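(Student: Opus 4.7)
My plan is to establish the inequality by exhibiting, for each $n$ and each starting index $j$, an explicit unit vector in $\ell^{2}$ whose value under the numerical form of $T_{a}$ is essentially the average $\tfrac{1}{n}\sum_{k=1}^{n}\abs{a_{k+j-1}}$. The first step is to reduce to the case of real non-negative weights: by Shields' theorem cited in the introduction, $T_{a}$ is unitarily equivalent to $T_{\abs{a}}$, so from unitary invariance of the numerical range I may assume $a\geq 0$.

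Given $n,j\in\N$, I would then define the test vector $x^{(n,j)}\in\ell^{2}$ to take the value $\frac{1}{\sqrt{n+1}}$ at each of the $n+1$ consecutive coordinates $j,j+1,\ldots,j+n$ and to vanish elsewhere, so that $\norm{x^{(n,j)}}_{2}=1$. Using the identity $\scalar{T_{a}x,x}=\sum_{k\geq 1}a_{k}x_{k}\overline{x_{k+1}}$, only the indices $k\in\menge{j,\ldots,j+n-1}$ contribute (these are the $k$ for which both $x_{k}$ and $x_{k+1}$ are nonzero), and a direct computation yields
\begin{align*}
\scalar{T_{a}x^{(n,j)},x^{(n,j)}} \;=\; \frac{1}{n+1}\sum_{k=j}^{j+n-1}a_{k} \;=\; \frac{n}{n+1}\cdot\frac{1}{n}\sum_{k=1}^{n}a_{k+j-1}.
\end{align*}
Taking the supremum over $j\in\N$ and using the definition of the numerical radius then gives the pre-limit inequality
\begin{align*}
w(T_{a}) \;\geq\; \frac{n}{n+1}\cdot A_{n},\qquad A_{n}\;:=\;\sup_{j\in\N}\frac{1}{n}\sum_{k=1}^{n}a_{k+j-1}.
\end{align*}

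To pass to the limit $n\to\infty$, I would verify that the sequence $f(n):=n A_{n}$ is subadditive: splitting a sum of length $m+n$ into its first $m$ terms and its last $n$ terms and translating the running index by $m$ in the tail sum shows $f(m+n)\leq f(m)+f(n)$. Fekete's subadditivity lemma then guarantees that $\lim_{n\to\infty}A_{n}$ exists and equals $\inf_{n\in\N}A_{n}$, so that the left-hand side of \eqref{eq: estimate any seq} is well defined. Since $\frac{n}{n+1}\to 1$, letting $n\to\infty$ in the pre-limit inequality produces the desired bound. The key creative point is the choice of an equal-weight block vector, which simultaneously has negligible boundary loss (the factor $\tfrac{n}{n+1}$) and reproduces exactly the ergodic-type average on the left; the only minor technical obstacle is the Fekete-type argument needed to justify that the limit exists in the first place.
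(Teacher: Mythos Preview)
Your proof is correct and follows essentially the same approach as the paper: both arguments plug an equal-weight block vector (supported on $n$ or $n{+}1$ consecutive coordinates) into $\scalar{T_{\abs{a}}x,x}$ to obtain a pre-limit inequality with an $O(1/n)$ boundary error, and then justify existence of the limit via subadditivity of $n\mapsto nA_{n}$. The only cosmetic differences are that the paper absorbs the boundary term by adding back $\abs{a_{n+j-1}}/n\leq\norm{a}_{\infty}/n$ whereas you carry the factor $\tfrac{n}{n+1}$, and the paper cites Sucheston for convergence of $(A_{n})$ whereas you invoke Fekete's lemma directly---which is the same underlying mechanism.
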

	\begin{proof}
		Fix arbitrary $j\in\N$ and $n\in\N$ for the moment. Then for any $x\in\ell^{2}$, we have
		\begin{align*}
			\scalar{T_{a}x,x}_{\ell^{2}} = \sum_{k=1}^{\infty} a_{k}x_{k}x_{k+1}^{*}.
		\end{align*}
		Hence we find for each $x\in\ell^{2}$ with $\norm{x}_{\ell^{2}} = 1$ 
		\begin{align*}
			\sum_{k=1}^{\infty} \abs{a_{k}}x_{k}x_{k+1}^{*} = \scalar{T_{\abs{a}}x,x}_{\ell^{2}} \leq  \norm{\abs{a}}_{w} = \norm{a}_{w}  
		\end{align*}
		For each $n\in\N$ we choose $x_{n} := \frac{1}{\sqrt{n}}\sum\limits_{k=1}^{n} e_{k+j-1}$, where $e_{k}$ is the $k$-th unit vector in $\ell^{2}$. Then for all $n\in\N$, we have $x_{n}\in\ell^{2}$, $\norm{x_{n}}_{\ell^{2}} = 1$ and
		\begin{align}
			\label{ineq: estimate nr}
			\frac{1}{n} \sum_{k=1}^{n-1} \abs{a_{k+j-1}}\leq \norm{a}_{w}.
		\end{align}
		Next, we consider for each $n\in\N$ the linear map $C_{n}:\ell^{\infty}\to\ell^{\infty}$ given by
		\begin{align*}
			C_{n}a := \kla{\frac{1}{n} \sum_{k=1}^{n} a_{k+j-1} }_{j\in\N},\quad a\in\ell^{\infty}.
		\end{align*}
		Clearly $C_{n}\in\mathcal{B}(\ell^{\infty})$ for all $n\in\N$ and choosing $n\in\N$ large enough and $j\in\N$, we have by \eqref{ineq: estimate nr}
		\begin{align*}
			\frac{1}{n} \sum_{k=1}^{n} \abs{a_{k+j-1}} \overset{\eqref{ineq: estimate nr}}{\leq} \norm{a}_{w} + \frac{\abs{a_{n+j-1}}}{n} \leq \norm{a}_{w} + \frac{\norm{a}_{\infty}}{n},   
		\end{align*}
		from which we obtain
		\begin{align*}
			c_{n}:=\norm{C_{n}\abs{a}}_{\infty} \leq \norm{a}_{w} + \frac{\norm{a}_{\infty}}{n}.
		\end{align*}
		Finally, observe that by the means of \cite[p. 309]{Sucheston} the sequence $\kla{c_{n}}$ converges, which implies
		\begin{align*}
			\lim_{n\to\infty}\sup_{j\in\N}  \frac{1}{n} \sum_{k=1}^{n} \abs{a_{k+j-1}} = \lim_{n\to\infty} c_{n} \leq \norm{a}_{w}.
		\end{align*}
	\end{proof}
	Now let us define for any $a\in\ell^{\infty}$
	\begin{align*}
		M(\abs{a}) :=  \lim_{n\to\infty}\sup_{j\in\N}  \frac{1}{n} \sum_{k=1}^{n} \abs{a_{k+j-1}}.
	\end{align*}
	Note that by Sucheston's Theorem \cite[Theorem, p. 309]{Sucheston} the number $M(\abs{a})$ is the largest value a real-valued Banach limit (for a definition see, Definition \ref{def:banach limit}) attains on the sequence of absolute values of $a\in\ell^{\infty}$.
	Further, the inequality of arithmetic and geometric means yields for any $n\in\N$ and $j\in\N$
	\begin{align*}
		\kla{\prod_{k=1}^{n} \abs{a_{k+j-1}}}^{\frac{1}{n}} \leq \frac{1}{n}\sum_{k=1}^{n} \abs{a_{k+j-1}}, \quad \forall a\in\ell^{\infty}. 
	\end{align*}
	Thus for all $a\in\ell^{\infty}$, we conclude 
	\begin{align}
		r(T_{a}) = \lim_{n\to\infty}\sup_{j\in\N}\kla{\prod_{k=1}^{n}\abs{a_{k+j-1}}}^{\frac{1}{n}} \leq \lim_{n\to\infty}\sup_{j\in\N}  \frac{1}{n} \sum_{k=1}^{n} \abs{a_{k+j-1}} = M(\abs{a}),
	\end{align}
	which implies for all $a\in\ell^{\infty}$
	\begin{align*}
		r(T_{a}) \leq M(\abs{a}) \leq \norm{a}_{\infty} = \norm{T_{a}}.
	\end{align*}
	This provides a connection between the maximal value of Banach limits on the sequence $\abs{a}$ for a given sequence $a\in\ell^{\infty}$ (cf. \cite[Theorem, p. 309]{Sucheston}) with the spectral radius and norm of the weighted shift corresponding to $a$. Notice that inequality \eqref{eq: estimate any seq} sharpens this connection between Banach limits and weighted shifts as follows:
	\begin{corollary}
		For all $a\in\ell^{\infty}$, we have
		\begin{align}
			\label{eq: sharpened connection}
			r(T_{a}) \leq M(\abs{a}) \leq \norm{a}_{w} \leq \norm{a}_{\infty}.
		\end{align}
	\end{corollary}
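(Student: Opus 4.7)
The plan is to simply chain together three inequalities that are already available at this point in the paper, so the proof will be essentially a one-paragraph concatenation rather than a fresh argument. The only non-trivial content was absorbed into Theorem~\ref{th: estimates}.

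First I would establish the middle inequality $M(|a|) \leq \|a\|_w$: this is nothing other than the content of Theorem~\ref{th: estimates} applied to the sequence $a$, since by definition $M(|a|) = \lim_{n\to\infty}\sup_{j\in\N} \tfrac{1}{n}\sum_{k=1}^{n}|a_{k+j-1}|$, which is exactly the left-hand side of \eqref{eq: estimate any seq}. Next, I would obtain the leftmost inequality $r(T_a) \leq M(|a|)$ by invoking the AM--GM inequality, exactly as done in the discussion immediately preceding the corollary: for every $n,j\in\N$ one has $\bigl(\prod_{k=1}^{n}|a_{k+j-1}|\bigr)^{1/n} \leq \tfrac{1}{n}\sum_{k=1}^{n}|a_{k+j-1}|$, and taking $\sup_{j}$ and $\lim_{n}$ on both sides together with Kelley's formula \eqref{eq: spectral radius} produces the claim.

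Finally, the rightmost inequality $\|a\|_w \leq \|a\|_\infty$ is already recorded as the left half of \eqref{ineq: equiv of norms}, which in turn comes from the general operator-theoretic bound $w(T) \leq \|T\|$ combined with the isometry $\|T_a\| = \|a\|_\infty$. Concatenating the three gives the claimed chain $r(T_a) \leq M(|a|) \leq \|a\|_w \leq \|a\|_\infty$.

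I do not expect any genuine obstacle: the main work was done in Theorem~\ref{th: estimates}, and what remains is bookkeeping. The only minor care is to ensure that the $\sup_j$ and $\lim_n$ on the spectral-radius side are handled correctly when passing from AM--GM applied pointwise in $(n,j)$ to the limit/supremum form, but monotonicity of $\sup$ and existence of the limit (guaranteed by Sucheston's subadditivity argument cited on p.~309 of \cite{Sucheston}) make this automatic.
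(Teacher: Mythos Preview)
Your proposal is correct and matches the paper's approach exactly: the corollary is stated without a separate proof, being an immediate consequence of the discussion preceding it, which combines Theorem~\ref{th: estimates} for $M(|a|)\leq\|a\|_w$, the AM--GM argument for $r(T_a)\leq M(|a|)$, and \eqref{ineq: equiv of norms} for $\|a\|_w\leq\|a\|_\infty$. There is nothing to add.
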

	Hence, we have proven \eqref{eq: sharpened connection intro} and there is actually the even stronger connection between the maximal value of real-valued Banach limits on the sequence $\abs{a}$ for a given $a\in\ell^{\infty}$ with the spectral- and numerical radius of the weighted shift corresponding to $a$. The next example shows that any of the above inequalities can be strict:
	\begin{example}
		Let $a\in\ell^{\infty}$ be periodic sequence of period $N=3$ with $a_{1}=a_{3}=\frac{2}{3}$ and $a_{2}=1$. Then
		\begin{align*}
			& r(T_{a}) = \sqrt[3]{a_{1}a_{2}a_{3}} = \kla{\frac{2}{3}}^{\frac{2}{3}} \approx 0.7631 \\ 
			& M(\abs{a}) = \frac{a_{1}+a_{2}+a_{3}}{3} = \frac{7}{9} \approx 0.7777 \\
			& \norm{a}_{w} = \frac{3+\sqrt{41}}{12} \approx 0.7836 \\
			& \norm{a}_{\infty} = 1.
		\end{align*}
		Here we used, that whenever $a\in\ell^{\infty}$ is a periodic sequence of period $N\in\N$, we have
		\begin{align*}
			& r(T_{a}) = \kla{\prod_{k=1}^{N} a_{k}}^{\frac{1}{N}} \\
			& M(\abs{a}) = \overline{\abs{a}} = \frac{1}{N} \sum_{k=1}^{N} \abs{a_{k}}
		\end{align*}
		and $\norm{a}_{w}$ can be computed from \eqref{eq: Ridge} (see  Ridge's Theorem \cite[Theorem 1, p. 107]{ridge2} or \cite[Example (2), p. 110]{ridge2}).
	\end{example}

	Now let us turn to further consequences and applications of inequality \eqref{eq: estimate any seq}. The  next result illustrates several specializations of it, i.e. the case of Cesáro convergent, convergent and periodic sequences.
	
	\begin{corollary}
		\label{cr: estimates}
		Let $a\in\ell^{\infty}$. Then the following hold
		\begin{enumerate}
			\item[(a)] if $|a|$ is Cesáro convergent, we have
			\begin{align*}
				\lim_{n\to\infty} \frac{1}{n} \sum_{k=1}^{n} \abs{a_{k}} \leq \norm{a}_{w}.
			\end{align*}
			\item[(b)] if $\abs{a}$ is convergent, we have
			\begin{align}
				\label{ineq: convergent}
				\lim_{k\to\infty}\abs{a_{k}}\leq \norm{a}_{w}.
			\end{align}
			\item[(c)] if $a$ is periodic of period $N\in\N$, we have
			\begin{align}
				\label{ineq: arithm}
				\overline{\abs{a}} = \frac{1}{N}\sum_{k=1}^{N} \abs{a_{k}} \leq \norm{a}_{w}.
			\end{align}
			\item[(d)] if there exists a periodic sequence $a_{1}\in\ell^{\infty}$ and a convergent sequence $a_{2}$ such that $a=a_{1}+a_{2}$, we have
			\begin{align}
				\label{eq: conv plus per}
				\abs{\overline{a_{1}}+\lim_{k\to\infty} a_{2,k}} \leq \norm{a}_{w}.
			\end{align}
		\end{enumerate}
	\end{corollary}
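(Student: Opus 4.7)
The plan is to deduce all four parts from Theorem \ref{th: estimates} by exhibiting suitable lower bounds for
\[
M(\abs{a}) = \lim_{n\to\infty}\sup_{j\in\N}\frac{1}{n}\sum_{k=1}^n \abs{a_{k+j-1}}.
\]
Since Theorem \ref{th: estimates} already gives $M(\abs{a}) \leq \norm{a}_{w}$, in each case it suffices to bound $M(\abs{a})$ from below by the desired quantity.

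For (a) I would fix $j=1$ inside the supremum, so that $\frac{1}{n}\sum_{k=1}^{n}\abs{a_k} \leq \sup_{j\in\N} \frac{1}{n}\sum_{k=1}^n \abs{a_{k+j-1}}$ holds for every $n$; the left-hand side converges as $n\to\infty$ by the Cesàro hypothesis, while the right-hand side converges to $M(\abs{a})$ (the existence of this limit is exactly what is established in the proof of Theorem \ref{th: estimates}, using Sucheston). Parts (b) and (c) are then immediate corollaries of (a): convergence of $\abs{a}$ to $L := \lim_k \abs{a_k}$ gives Cesàro convergence to $L$, and periodicity of $a$ with period $N$ forces $\abs{a}$ to be periodic with period $N$, hence Cesàro convergent with limit $\overline{\abs{a}}$.

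Part (d) is the main point. Writing $a = a_{1} + a_{2}$, I would first apply the triangle inequality inside the average to obtain
\[
\frac{1}{n}\sum_{k=1}^n \abs{a_{k+j-1}} \geq \abs{\frac{1}{n}\sum_{k=1}^n a_{1,k+j-1} + \frac{1}{n}\sum_{k=1}^n a_{2,k+j-1}}
\]
for all $n,j\in\N$. The crucial observation is that whenever $N$ divides $n$, the first inner average equals $\overline{a_{1}}$ regardless of $j$, because one is summing the periodic sequence $a_{1}$ over an integer number of full periods. With such an $n$ fixed, letting $j\to\infty$ sends each of the finitely many terms $a_{2,k+j-1}$ (with $1 \leq k \leq n$) to $\ell := \lim_k a_{2,k}$, so the second average tends to $\ell$. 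Consequently, $\sup_{j\in\N}\frac{1}{n}\sum_{k=1}^n \abs{a_{k+j-1}} \geq \abs{\overline{a_{1}} + \ell}$ for every multiple $n$ of $N$, and letting $n\to\infty$ along such multiples gives $M(\abs{a}) \geq \abs{\overline{a_{1}} + \ell}$, which together with Theorem \ref{th: estimates} yields \eqref{eq: conv plus per}.

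The main obstacle is the order of limits in (d): taking $j\to\infty$ first would destroy any information carried by the periodic part $a_{1}$, while taking $n\to\infty$ first does not allow one to extract the limit $\ell$ of $a_{2}$ from a Cesàro average whose starting index $j$ is frozen. Restricting $n$ to multiples of $N$ is precisely what decouples the two pieces of the decomposition, pinning the periodic average to $\overline{a_{1}}$ and thereby leaving $j\to\infty$ free to recover $\ell$ from the convergent tail.
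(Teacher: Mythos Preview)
Your proposal is correct. For parts (a)--(c) your argument coincides with the paper's: both fix $j=1$ to get $\frac{1}{n}\sum_{k=1}^{n}\abs{a_{k}}\leq \norm{C_{n}\abs{a}}_{\infty}$, pass to the limit using Theorem \ref{th: estimates}, and then read off (b) and (c) as special cases of (a).

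For (d) your argument is valid but takes a longer route than the paper. The paper deduces (d) directly from the limsup inequality with $j=1$ fixed throughout: since $a_{1}$ is periodic its Cesàro averages $\frac{1}{n}\sum_{k=1}^{n}a_{1,k}$ converge to $\overline{a_{1}}$ (no restriction to multiples of $N$ is needed), and since $a_{2}$ is convergent its Cesàro averages converge to $\ell=\lim_{k}a_{2,k}$; hence $\frac{1}{n}\sum_{k=1}^{n}a_{k}\to\overline{a_{1}}+\ell$ and the triangle inequality gives
\[
\abs{\overline{a_{1}}+\ell} = \lim_{n\to\infty}\abs{\frac{1}{n}\sum_{k=1}^{n}a_{k}} \leq \limsup_{n\to\infty}\frac{1}{n}\sum_{k=1}^{n}\abs{a_{k}} \leq \norm{a}_{w}.
\]
The ``obstacle'' you describe --- that taking $n\to\infty$ with $j$ frozen would not recover $\ell$ --- is therefore not actually present: a convergent sequence is Cesàro convergent to the same limit, so no passage $j\to\infty$ is required. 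Your detour through multiples of $N$ and $j\to\infty$ works, but it buys nothing beyond the direct $j=1$ computation.
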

	\begin{proof}
		From the proof of the previous item it follows that for any $n\in\N$, we have
		\begin{align*}
			\frac{1}{n}\sum_{k=1}^{n} \abs{a_{k}} \leq \norm{C_{n}\abs{a}}_{\infty}
		\end{align*}
		Thus, we obtain
		\begin{align*}
			\limsup_{n\to\infty} \frac{1}{n}\sum_{k=1}^{n} \abs{a_{k}} \leq  \lim_{n\to\infty}\sup_{j\in\N}  \frac{1}{n} \sum_{k=1}^{n} \abs{a_{k+j-1}} \leq \norm{a}_{w}.
		\end{align*}
		Hence (a) follows immediately and (b), (c) and (d) are consequences of (b).
	\end{proof}
	
	\begin{remark}
		\label{rm: limit operator}
		\begin{enumerate}
			\item[1.] The lower bound \eqref{ineq: arithm} in the periodic case, is clearly sharp for constant sequences. Further it is also attained in the case of 2-periodic sequences (for a proof see \cite[Example (1)]{ridge2}), i.e. we have 
			\begin{align*}
				\norm{a}_{w} = \frac{\abs{a_{1}}+\abs{a_{2}}}{2} = \overline{\abs{a}}
			\end{align*}
			
			\item[2.] Note that one can also recover inequality \eqref{ineq: arithm} from equation \eqref{eq: Ridge} choosing $x\in\R^{N}$ to be the unit vector with all entries equal to $\frac{1}{\sqrt{N}}$.
			
			\item[3.] Inequality \eqref{ineq: convergent} was first proven by Wang and Wu \cite[Prop. 2.2 (b)]{WangWu2011} using a different method involving the essential numerical range. We would like to point out, that it is possible to prove
			\begin{align*}
				\limsup_{n\to\infty} \frac{1}{n}\sum_{k=1}^{n} \abs{a_{k}} \leq M(\abs{a}) \leq \limsup_{n\to\infty} \abs{a_{n}},\quad \forall a\in\ell^{\infty}.
			\end{align*}
			Thus, if $a\in\ell^{\infty}$ and $|a|$ is convergent, we obtain
			\begin{align*}
				\lim_{n\to\infty}\sup_{j\in\N}  \frac{1}{n} \sum_{k=1}^{n} \abs{a_{k+j-1}} = \lim_{k\to\infty} \abs{a_{k}}.
			\end{align*}
			Hence, inequality \eqref{eq: estimate any seq} reduces  in the case that $\abs{a}$ converges to inequality \eqref{ineq: convergent}. Further, inequality \eqref{ineq: convergent} actually shows that the limit operator 
			\begin{align*}
				\lim:c\to\C, \,\kla{a_{k}} \mapsto \lim_{k\to\infty} a_{k} 
			\end{align*}
			is continuous on the Banach space $(c,\norm{\cdot}_{w})$ of complex-valued convergent sequences equipped with the norm $\norm{\cdot}_{w}$ and has operator norm equal to 1, where the equality follows from considering the sequence $a=\mathbb{1} = \kla{1,1,…}$.
			
			\item[4.] Note that in case (d) of the previous Corollary, inequality \eqref{eq: conv plus per} actually provides a bound for any finite linear combination of convergent sequences and periodic sequences, as the linear combination of finitely many convergent sequences is convergent and similarly the linear combination of finitely many periodic sequences is periodic.
			
			\item[5.] Finally, we would like to point out, that the uniform norm satisfies for any $a\in\ell^{\infty}$ the inequality
			\begin{align*}
				\limsup_{k\to\infty} \abs{a_{k}} \leq \norm{a}_{\infty}.
			\end{align*}
			However $\norm{\cdot}_{w}$ does not satisfy such an inequality in general as we have on the one hand for any convergent sequence $a\in\ell^{\infty}$
			\begin{align*}
				\limsup_{k\to\infty} \abs{a_{k}} = \lim_{k\to\infty} \abs{a_{k}} \leq \norm{a}_{w},
			\end{align*}
			but on the other hand for any 2-periodic sequence $a\in\ell^{\infty}$
			\begin{align*}
				\norm{a}_{w} = \frac{\abs{a_{1}}+\abs{a_{2}}}{2} \leq \norm{a}_{\infty} = \max\menge{\abs{a_{1}},\abs{a_{2}}} = \limsup_{k\to\infty} \abs{a_{k}}.
			\end{align*}
			Note that both inequalities can be strict, e.g. for the convergent sequence $a=\kla{\frac{1}{n}}_{n\in\N}$ and periodic sequence $a=(0,1,0,1,…)$.
		\end{enumerate}
	\end{remark}
	
	We will now look at some examples, where we use the previous inequalities to give first estimates from below for the numerical radius of weighted shifts for certain weights. 
	
	\begin{example}
		We consider the sequence $a=\kla{a_{n}}\in\ell^{\infty}$ defined by 
		\begin{align*}
			a_{n} := \begin{cases}
				2-\frac{1}{n}, & \mbox{if $n$ is odd} \\
				1+\frac{1}{n}, & \mbox{else}. 
			\end{cases}
		\end{align*}
		Then $a$ is the sum of the convergent sequence $d:=\kla{\frac{(-1)^{n}}{n}}\in c_{0}$ and the periodic sequence $b = \kla{1,2,1,2,...}$, so that we obtain the bounds 
		\begin{align*}
			\frac{3}{2} \overset{\eqref{eq: conv plus per} }{\leq} \norm{a}_{w} \leq \norm{a}_{\infty} = 2.
		\end{align*}
	\end{example}
	
	\begin{example}
		We consider a sequence of the type $a=(h,k,a_{1},a_{2},a_{1},a_{2},...)\in\ell^{ \infty}$, where $h,k,a_{1},a_{2}\in\C$. Such a weight is a sum of the convergent sequence $(h-a_{1},k-a_{2},0,0,...)$ converging to zero and the 2-periodic sequence $(a_{1},a_{2},a_{1},a_{2},...)$. Thus we may apply inequality \eqref{eq: conv plus per} to obtain the following estimate
		\begin{align*}
			\frac{\abs{a_{1}}+\abs{a_{2}}}{2} \leq \norm{a}_{w}. 
		\end{align*}   
		Moreover in the case $\abs{h}\leq \abs{a_{1}}$ and $\abs{k}\leq \abs{a_{2}}$, we find by monotonicity of the numerical radius of weighted shifts, that
		\begin{align*}
			\norm{a}_{w} = \frac{\abs{a_{1}}+\abs{a_{2}}}{2}.
		\end{align*} 
		
	\end{example}
	
	\begin{example}
		\label{ex: norm of op}
		In this example we show, that there exist an operator $R\in\mathcal{B}(\ell^{\infty})$ with $\norm{R}=1$ and $\norm{R}_{w}=2$. To this end we consider for each $m\in\N$ the bounded linear operator $R_{m}:\ell^{\infty}\to\ell^{\infty}$ given by
		\begin{align*}
			R_{m}a := \kla{a_{m},a_{m},\dots} = a_{m}\cdot \mathbb{1},\quad \forall a\in\ell^{\infty}.
		\end{align*}
		Hence, we have for all $a\in\ell^{\infty}$
		\begin{align*}
			\norm{R_{m}a}_{\infty} = \abs{a_{m}} = \norm{R_{m}a}_{w}.
		\end{align*}
		This readily shows $\norm{R_{m}} = 1$ and
		\begin{align*}
			\norm{R_{m}a}_{w} = \abs{a_{m}} = 2\cdot\frac{\abs{a_{m}}}{2} \overset{\eqref{ineq: estimate nr}}{\leq} 2\norm{a}_{w}.
		\end{align*}
		Finally, if $m$ is odd the sequence $a=(2,0,2,0,\dots)$ satisfies $\norm{a}_{w} = 1$ and $\norm{R_{m}a}_{w} = 2$. If $m$ is even, the sequence $a=(0,2,0,2,\dots)$ yields the same result. This shows that $\norm{R_{m}}_{w}=2$.
	\end{example}
	
	\begin{example}
		\label{ex: projection}
		In this example we present functionals $\phi\in(\ell^{\infty})^{*}$ satisfying $\norm{\phi}_{w} = 2\norm{\phi}$, which are an analogue of the previous example. More precisely, let $m\in\N$ and $\phi_{m}\colon \ell^{\infty}\to\C$ be the projection onto the $m$-th coordinate, i.e. for all $a\in\ell^{\infty}$ we set $\phi_{m}(a) := a_{m}$. Then clearly $\norm{\phi_{m}} = 1$, and $\norm{\phi_{m}}_{w} = 2$ follows with the same approach as in the previous example.
	\end{example}
	
	Although the previous examples show how these elementary estimates might be useful in some situations, they can provide no useful information in general. For example Chien and Nakazato \cite{chien1} have shown, that the numerical radius of a weighted shift with geometric weights $a=(1,r,r^{2},r^{3},...)$, $0\leq r\leq 1$, can be nicely bounded from below by picking 'good' vectors in $\ell^{2}$, instead of using estimate \eqref{ineq: convergent}, which only provides the trivial bound $0$.
	\\\\In what follows next, we slightly generalize a result of Tam \cite[Theorem 1 (a)]{tam} stating, that for any $K>0$ and $a\in\ell^\infty$ with $|a|\leq K$ and $a_k\to K$, one has $W(T_a) = \menge{z\in\C : |z| < K}$, by relaxing the convergence condition to a condition involving the largest value a Banach limit can attain on a the sequence of absolute values of a bounded sequence. Note that we show in Remark \ref{rm: Cesaro}, that with the other assumptions of the result, it includes the case of Cesàro convergence, which in turn illustrates that this is a generalization, but actually only a small improvement of the original result, since in this case the assumptions imply that there exists a set $J\subseteq \N$ with density equal to 1 such that $|a|$ converges along $J$ to $K$. Note that Tam's proof works as before.
	
	\begin{corollary}
		Let $K>0$, $a\in\ell^{\infty}$ such that $|a|\leq K$ and 
		\begin{align}
			\label{eq: Tam}
			\lim_{n\to\infty}\sup_{j\in\N}  \frac{1}{n} \sum_{k=1}^{n} \abs{a_{k+j-1}} = K.
		\end{align}
		Then $W(T_{a}) = \menge{z\in\C\,:\,\abs{z}< K}$.
	\end{corollary}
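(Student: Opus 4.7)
The plan is to follow the same strategy that Tam used, adapted to the weaker hypothesis. The first step is to combine the assumption $M(\abs{a}) = K$ with the chain of inequalities \eqref{eq: sharpened connection} from the corollary just proved: since $\abs{a}\leq K$ gives $\norm{a}_{\infty}\leq K$, I get
\begin{align*}
K = M(\abs{a}) \leq \norm{a}_{w} \leq \norm{a}_{\infty} \leq K,
\end{align*}
so $w(T_{a}) = \norm{a}_{w} = K$. Because the numerical range of a weighted shift is always a disk about the origin (Shields), it follows that $\overline{W(T_{a})} = \menge{z\in\C : \abs{z}\leq K}$ and the remaining task is to prove that the bounding circle is not attained, i.e.\ $\overline{W(T_{a})}\setminus W(T_{a}) = \menge{z : \abs{z}=K}$.

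Next, using the rotational symmetry of $W(T_{a})$, it suffices to show that no unit vector $x\in\ell^{2}$ satisfies $\abs{\scalar{T_{a}x,x}} = K$. Suppose for contradiction that such an $x$ exists with $\norm{x}_{\ell^{2}}=1$. Writing out the inner product and applying the triangle inequality, the bound $\abs{a_{k}}\leq K$, and the elementary estimate $\abs{x_{k}}\abs{x_{k+1}} \leq \tfrac{1}{2}(\abs{x_{k}}^{2}+\abs{x_{k+1}}^{2})$, I would chain
\begin{align*}
K = \left|\sum_{k=1}^{\infty} a_{k} x_{k}\overline{x_{k+1}}\right|
\leq \sum_{k=1}^{\infty} \abs{a_{k}}\abs{x_{k}}\abs{x_{k+1}}
\leq K\sum_{k=1}^{\infty}\abs{x_{k}}\abs{x_{k+1}}
\leq K\left(1 - \tfrac{1}{2}\abs{x_{1}}^{2}\right),
\end{align*}
using $\sum_{k\ge 1}(\abs{x_{k}}^{2}+\abs{x_{k+1}}^{2}) = 2-\abs{x_{1}}^{2}$ in the last step. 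Equality throughout forces $x_{1}=0$ and $\abs{x_{k}}=\abs{x_{k+1}}$ for every $k\ge 1$, hence $x\equiv 0$, contradicting $\norm{x}_{\ell^{2}}=1$.

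Combining the two steps yields $W(T_{a}) = \menge{z\in\C : \abs{z}<K}$, as desired. I expect the only genuinely delicate part to be the supremum-not-attained argument; in particular one has to remember that the first basis coefficient $x_{1}$ contributes only once to the sum $\sum(\abs{x_{k}}^{2}+\abs{x_{k+1}}^{2})$, which is exactly what makes the strict inequality $\le K(1-\tfrac{1}{2}\abs{x_{1}}^{2})$ work and forces $x_{1}=0$ in the equality case, from which the cascade $\abs{x_{k}}=\abs{x_{k+1}}$ collapses everything to zero. All the other ingredients (equality of $w(T_{a})$ and $K$, circular symmetry, reduction to a single boundary point) follow directly from earlier results in the paper.
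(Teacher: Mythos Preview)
Your proof is correct, and the first half (squeezing $w(T_{a})=K$ out of the chain $K=M(\abs{a})\leq\norm{a}_{w}\leq\norm{a}_{\infty}\leq K$ and invoking circular symmetry) matches the paper exactly. The difference lies in how you exclude the boundary circle. The paper argues via the point spectrum: it shows by a direct recursion that $\sigma_{p}(T_{a})\subseteq\{0\}$, and then invokes the classical fact (Halmos, Problem~212) that any $\lambda\in\partial W(T_{a})$ with $\abs{\lambda}=\norm{T_{a}}$ must lie in $\sigma_{p}(T_{a})$, yielding a contradiction since $K>0$. You instead give a self-contained AM--GM estimate showing directly that no unit vector can realise $\abs{\scalar{T_{a}x,x}}=K$: from $K\leq K\sum_{k}\abs{x_{k}}\abs{x_{k+1}}\leq K\bigl(1-\tfrac{1}{2}\abs{x_{1}}^{2}\bigr)\leq K$ you force $x_{1}=0$ and $\abs{x_{k}}=\abs{x_{k+1}}$ for all $k$, whence $x=0$. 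Your route is more elementary in that it avoids the external spectral lemma, while the paper's route is slightly more conceptual and reusable (the observation $\sigma_{p}(T_{a})\subseteq\{0\}$ holds for \emph{every} weighted forward shift, independent of the hypotheses on $a$). Either argument is perfectly adequate here.
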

	\begin{proof}
		From inequality \eqref{eq: estimate any seq}, we conclude
		\begin{align*}
			K = \lim_{n\to\infty}\sup_{j\in\N}  \frac{1}{n} \sum_{k=1}^{n} \abs{a_{k+j-1}} \leq \norm{a}_{w} \leq \norm{a}_{\infty} \leq K.
		\end{align*}
		Hence $\norm{T_{a}} = \norm{a}_{w} = K$ and since $W(T_{a})$ is a circular disc centered at the origin it remains to check that $W(T_{a})$ is open. Now suppose that there exists $\lambda\in\sigma_{p}(T_{a})$, $\lambda\neq 0$. Then there exists $x\in\ell^{2},x\neq 0$ such that $T_{a}x = \lambda x$, that is 
		\begin{align*}
			\lambda x_{1} = 0,\quad a_{k}x_{k} = \lambda x_{k+1},\,\forall k\in\N.
		\end{align*}
		We readily conclude $x_{k}=0$ for any $k\in\N$, so $x = 0$, which is a contradiction, so there cannot exist such a $\lambda$. So $\sigma_{p}(T_{a})\subseteq \menge{0}$. Now if $W(T_{a})$ is closed and $\lambda\in\partial W(T_{a})$, then $\abs{\lambda} = \norm{T_{a}} = K$ and by a classical result $\lambda\in \sigma_{p}(T_{a})$ (see \cite[Solution 212, p. 316]{halmos}), which is a contradiction as $K>0$. Hence $W(T_{a})$ is open. 
	\end{proof}

	\begin{remark}
		\label{rm: Cesaro}
		Let us provide some remarks on a specialization of condition \eqref{eq: Tam}, i.e. the Cesàro convergent case:
		\begin{enumerate}
			\item If we substitute in the previous Corollary condition \eqref{eq: Tam} with 
			\begin{align*}
				\limsup_{n\to\infty} \frac{1}{n}\sum_{k=1}^{n} \abs{a_{k}} = K,
			\end{align*}
			then we immediately obtain \eqref{eq: Tam}, due to the assumptions of the Corollary. Hence, the conclusion of the previous Corollary applies also in the case of Cesàro convergent $\abs{a}$ with Cesàro limit $K$.
			
			\item Note that, as Tam concluded \cite[Proof of Theorem 1 (a)]{tam}, the conditions $K>0$ and $a\in\ell^{\infty}$ with $\abs{a}\to K$, clearly imply that only finitely many entries of $|a|$ can be equal to zero and all other entries have to be bigger than zero. However the conditions $\abs{a}\leq K$ and 
			\begin{align*}
				\lim_{n\to\infty} \frac{1}{n}\sum_{k=1}^{n} |a_{k}| = K
			\end{align*}
			do not lead to such a strong conclusion. To be more precise as the sequence of Cesàro averages $\sigma_{n} := \frac{1}{n}\sum_{k=1}^{n}|a_{k}|$ converges to $K>0$, either $\sigma_{n}\neq 0$ for all $n\in\N$ or there exists some minimal index $N\in\N$ such that we have $\sigma_{1}=…=\sigma_{N}=0$ and $|\sigma_{n}|\neq 0$ for all $n\in\N,n> N$. However this implies by using the equality $(n+1)\sigma_{n+1} -n\sigma_{n} = a_{n+1}$, that $a_{1}=…=a_{N}=0$ and $a_{N+1}\neq 0$. However as $\sigma_{n}$ is an average over the entries of $a$, we do not gain information about further elements of the sequence $a$ as $a_{N+2}$ could be zero and we would still have $\sigma_{N+2}\neq 0$. To illustrate this we consider the sequence $a=(1,0,1,1,0,1,1,1,0,...)$ or more precisely $a=(a_{n})$ defined via
			\begin{align*}
				a_{n} = \begin{cases}
					0, & \mbox{if } n=\frac{l^{2}+3l}{2} \mbox{ for }l\in\N, \\
					1, & \mbox{else.}
				\end{cases}
			\end{align*}
			Then clearly $\sigma_{n}>0$, $a_{n}\leq 1$ for all $n\in\N$ and it is relatively easy to show that $\sigma_{n}\to 1$. So the previous Corollary applies and yields that $W(T_{a})$ is the open unit disk about the origin. However infinitely many entries of $a$ are equal to zero and the set of these entries has density equalt to 0, but $a$ converges along a set of density equal to 1 to it's Cesàro limit $K=1$.
			
			\item The last conclusions of the previous item are true for any sequence satisfying the following conditions: let $K>0$, $a\in\ell^{\infty}$ such that $|a|\leq K$ and
			\begin{align*}
				\lim_{n\to\infty} \frac{1}{n}\sum_{k=1}^{n} |a_{k}| = K.
			\end{align*}
			Then, we have
			\begin{align*}
				0 \leq \frac{1}{n}\sum_{k=1}^{n} |K - |a_{k}|| \leq  \frac{1}{n}\sum_{k=1}^{n} K - |a_{k}| \to 0.
			\end{align*}
			So, by the Lemma of Koopman-von Neumann \cite[Lemma 9.16]{eisner}, it follows that there exists a set $J\subseteq \N$ of density equal to 1, i.e.
			\begin{align*}
				\lim_{n\to\infty} \frac{\abs{\menge{1,...,n}\cap J}}{n} = 1,
			\end{align*}
			such that
			\begin{align*}
				\lim_{\substack{n\to\infty \\ n\in J}} |a_{n}| = K.
			\end{align*}
			Hence only finitely many points $j_{1},...,j_{m}\in J$ can satisfy $a_{j_{i}} = 0,\,i=1,…,m$, since $K>0$. Consequently, the set $M:=\menge{n\in\N\,:\,a_{n}\neq 0}$ has density equal to 1, since we have $J\backslash\menge{j_{1},…,j_{m}} \subseteq M$, from which we also obtain that the set $\menge{n\in\N\,:\, a_{n}=0}$ has density equal to 0. 
		\end{enumerate}
	\end{remark}
	
	In the last result of this section we prove that any weighted shift with non-constant periodic weight is non-convexoid, whereas any weighted shift with a constant weight is clearly convexoid.
	
	\begin{corollary}
		\label{cr: non-convexoid}
		Let $a\in\ell^{\infty}$ be periodic of period $N$. If there exists $i,j\in\menge{1,...,N}$ with $i\neq j$ such that $\abs{a_{i}}\neq \abs{a_{j}}$, then 
		\begin{align*}
			r(T_{a}) < \frac{1}{N}\sum_{k=1}^{N} \abs{a_{k}} \leq \norm{a}_{w} < \max\menge{\abs{a_{1}},...,\abs{a_{N}}}, 
		\end{align*}
		i.e. $T_{a}$ is non-normaloid and non-convexoid.
	\end{corollary}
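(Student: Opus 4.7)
The strategy is to establish the chain as three separate assertions: the strict inequality on the left from AM-GM applied to Kelley's formula, the middle (non-strict) inequality from Corollary \ref{cr: estimates}(c), and the strict inequality on the right from the strict monotonicity result of Wang and Wu applied to the constant majorant. The non-normaloid and non-convexoid claims then drop out by comparing the numerical range with the operator norm and with the spectrum, both of which are closed disks about the origin.

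For the middle inequality I would simply invoke Corollary \ref{cr: estimates}(c), which is already at our disposal. For the leftmost inequality, Kelley's formula \eqref{eq: spectral radius} specializes for periodic $a$ of period $N$ to $r(T_{a})=\kla{\prod_{k=1}^{N}\abs{a_{k}}}^{1/N}$ (since $\sup_{j}$ is constant in $j$ by periodicity, and the products over $n=mN$ factor into $m$ copies of $\prod_{k=1}^{N}\abs{a_{k}}$). Applying the strict AM-GM inequality to the non-negative numbers $\abs{a_{1}},\ldots,\abs{a_{N}}$ then gives the desired strict inequality: this is the standard AM-GM boundary case when all entries are positive (the non-constancy hypothesis forces them not to be all equal), and it is immediate when some entry vanishes, since then the geometric mean is $0$ while the arithmetic mean is strictly positive (not all $\abs{a_{k}}$ can be $0$ by non-constancy).

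The rightmost strict inequality is the step that requires actual work. My plan is to combine the Wang--Wu strict monotonicity of $\norm{\cdot}_{w}$ for periodic sequences of a fixed period (\cite[Lemma 4.4 (c)]{WangWu2011}, quoted in the introduction) with the observation that the constant sequence is normaloid. Setting $M:=\max\menge{\abs{a_{1}},\ldots,\abs{a_{N}}}>0$, the sequence $b:=M\cdot \mathbb{1}$ is periodic of period $N$, satisfies $\abs{a}\leq \abs{b}$, and there exists some $k_{0}$ with $\abs{a_{k_{0}}}<M=\abs{b_{k_{0}}}$ thanks to the non-constancy hypothesis. Strict monotonicity therefore yields $\norm{a}_{w}<\norm{b}_{w}$. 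On the other hand $T_{b}=M\cdot T$ where $T$ is the unweighted forward shift, which is an isometry and hence hyponormal, so $w(T)=\norm{T}=1$, giving $\norm{b}_{w}=M$. Combining these two facts completes the estimate.

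Finally, the structural conclusions follow from the two strict outer inequalities. The rightmost one is precisely $w(T_{a})=\norm{a}_{w}<\max_{k}\abs{a_{k}}=\norm{a}_{\infty}=\norm{T_{a}}$, so $T_{a}$ is not normaloid. For convexoidity, Kelley's theorem identifies $\sigma(T_{a})$ with the closed disk of radius $r(T_{a})$, which is already convex, so $\conv{\sigma(T_{a})}=\sigma(T_{a})$; meanwhile $\overline{W(T_{a})}$ is the closed disk of radius $\norm{a}_{w}$. The leftmost strict inequality $r(T_{a})<\norm{a}_{w}$ then forces these two disks to differ, so $T_{a}$ is not convexoid.
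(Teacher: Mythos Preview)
Your proof is correct and follows essentially the same route as the paper's own argument: the Wang--Wu strict monotonicity lemma applied to the constant majorant for the rightmost strict inequality, Corollary~\ref{cr: estimates}(c) for the middle one, and the strict AM--GM inequality combined with the explicit formula for $r(T_a)$ in the periodic case for the leftmost one, with the non-normaloid and non-convexoid conclusions read off from the disk structure of $\sigma(T_a)$ and $\overline{W(T_a)}$. Your write-up is in fact slightly more careful than the paper's in two places: you justify $\norm{b}_{w}=M$ via hyponormality of the unweighted shift, and you handle the degenerate AM--GM case where some $\abs{a_k}=0$ explicitly.
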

	\begin{proof}
		Since the spectrum and the closure of the numerical range of a weighted shift are both closed discs centered about the origin, a weighted shift is convexoid if and only if the spectral radius and the numerical radius are the same. Now, since $a\in\ell^{\infty}$ is non-constant, so in particular non-zero. The constant sequence $b:=\kla{\norm{a}_{\infty}}$ is periodic of period $N$ and there exists some $n_{0}\in\N$ with $|a_{n_{0}}|<|b_{n_{0}}|$, so using \cite[Lemma 4.4 (c)]{Wu/shifts}, we obtain
		\begin{align*}
			\norm{a}_{w} < \norm{b}_{w}= \norm{a}_{\infty}
		\end{align*} 
		and in particular $T_{a}$ is non-normaloid. Further by \eqref{ineq: arithm} we have
		\begin{align*}
			\frac{1}{N}\sum_{k=1}^{N} \abs{a_{k}} \leq \norm{a}_{w}.
		\end{align*} 
		We denote $A:= \prod_{k=1}^{N} a_{k}$ and conclude by Gelfand's formula for the spectral radius
		\begin{align*}
			r(T_{a}) = \lim_{n\to\infty}\norm{T_{a}^{n}}^{\frac{1}{n}} = \lim_{k\to\infty}\norm{T_{a}^{kN}}^{\frac{1}{kN}} = \lim_{k\to\infty} \kla{A^{k}}^{\frac{1}{kN}} = A^{\frac{1}{N}} = \kla{\prod_{k=1}^{N} a_{k}}^{\frac{1}{N}}
		\end{align*}
		Hence, by the inequality of arithmetic and geometric mean we find 
		\begin{align*}
			r(T_{a}) = \kla{\prod_{k=1}^{N} a_{k}}^{\frac{1}{N}} < \frac{1}{N}\sum_{k=1}^{N} \abs{a_{k}} \leq \norm{a}_{w}.
		\end{align*}
		Hence we have shown that $T_{a}$ is non-convexoid.
	\end{proof}
	
	\begin{remark}
		As we have seen in the previous theorem a weighted shift with weight $a\in\ell^{\infty}$ is convexoid if and only if $r(T_{a}) = \norm{a}_{w}$. An operator satisfying this condition is called \textit{spectraloid} (cf. \cite[Introduction of Chapter 6, p. 150]{GustafsonRao1997}). It seems that conditions and further characterizations for weighted shifts, which guarantee that a weighted shift is (non-)spectraloid or (non-)normaloid, apart from the ones for arbitrary operators on complex Hilbert spaces (cf. \cite[Section 6.1, 6.2, 6.3]{GustafsonRao1997} and the references therein) are not known in the literature up to now. We would like to point out, that thanks to \eqref{eq: sharpened connection}, if $a\in\ell^{\infty}$ satisfies one of the conditions
		\begin{align*}
			r(T_{a}) < M(\abs{a})\quad \mbox{ or }\quad M(\abs{a}) < \norm{a}_{w},
		\end{align*}
		it follows that $T_{a}$ is non-spectraloid and hence non-convexoid. However, it is not clear how to verify either of these conditions in general. Thus it would be interesting to find other conditions or characterizations, which might be easier to verify.
	\end{remark}

	\section{Norms of multiplication operators and weighted shifts on $\ell^{\infty}$}
	
	Although we have already shown in Example \ref{ex: norm of op} that in general the operator norm of an operator $R\in\mathcal{B}(\ell^{\infty})$ with respect to the uniform norm and the numerical radius of weighted shifts (in both the domain and the image) might differ, we will see in this chapter, that this difference does not appear for multiplication operators and weighted shifts on $\ell^{\infty}$.
	To avoid confusion with the forward shift $T$ and the backward shift $S$ on $\ell^{2}$, we denote by $F\in \mathcal{B}\kla{\ell^{\infty}}$ the forward shift and by $B\in \mathcal{B}\kla{\ell^{\infty}}$ the backward shift on $\ell^{\infty}$. Further, we denote for $d\in\ell^{\infty}$ by $m_{d}\in\mathcal{B}\kla{\ell^{2}}$ and $M_{d}\in\mathcal{B}\kla{\ell^{\infty}}$ the multiplication operators corresponding to $d$ on $\ell^{2}$ and $\ell^{\infty}$ respectively. Finally, we denote by $F_{d}\in\mathcal{B}\kla{\ell^{\infty}}$ and $B_{d}\in\mathcal{B}\kla{\ell^{\infty}}$ the (unilateral) weighted forward- and backward-shift on $\ell^{\infty}$ associated to $d$ respectively. Again we will mean by 'shifts' only forward-shifts.
	
	As we have seen in the introductory section, a first estimate for the operator norms of the above operators with respect to $\norm{\cdot}_{w}$ follows from \eqref{ineq: op norm} and we have
	\begin{align*}
		\frac{\norm{d}_{\infty}}{2} \leq \norm{M_{d}}_{w},\,\norm{F_{d}}_{w},\norm{B_{d}}_{w} \leq 2\norm{d}_{\infty},\quad \forall d\in\ell^{\infty}.
	\end{align*}
	This naturally raises the question if the constant $\frac{1}{2}$ and 2 are are optimal in these inequalities. The following Theorem shows that this is actually not the case, as we already claimed above.
	
	\begin{theorem}
		Let $d\in\ell^{\infty}$. Then we have $\norm{M_{d}}_{w} = \norm{F_{d}}_{w} = \norm{B_{d}}_{w} = \norm{d}_{\infty}$.
	\end{theorem}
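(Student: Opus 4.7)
My plan is to establish the upper bound $\norm{R}_{w}\leq\norm{d}_{\infty}$ for each of the three operators $R\in\menge{M_{d},F_{d},B_{d}}$ and then match it by evaluating on the coordinate sequences $e_{k}\in\ell^{\infty}$ (the sequence with a $1$ in position $k$ and zeros elsewhere). The master estimate is the one for the multiplication operator $M_{d}$, and the two weighted-shift cases reduce to it via short manipulations on the Hilbert-space side.

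First I would show $\norm{M_{d}a}_{w}\leq\norm{d}_{\infty}\norm{a}_{w}$ for every $a\in\ell^{\infty}$. By Shields' theorem (item 4 of the list in the introduction), $\norm{b}_{w}=\norm{\abs{b}}_{w}$ for every $b\in\ell^{\infty}$, so after replacing $a$ by $\abs{a}$ and $d$ by $\abs{d}$ we may assume $a\geq 0$ and $d\geq 0$. For such nonnegative weights the variational formula
\begin{align*}
	w(T_{b})=\sup\menge{\abs{\sum_{k=1}^{\infty}b_{k}x_{k}x_{k+1}^{*}}\,:\,x\in\ell^{2},\,\norm{x}_{2}=1}
\end{align*}
reduces to a supremum over unit vectors with nonnegative real entries, since the triangle inequality yields $\abs{\sum b_{k}x_{k}x_{k+1}^{*}}\leq\sum b_{k}\abs{x_{k}}\abs{x_{k+1}}$ and this upper bound is attained by $\abs{x}$. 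For such nonnegative $x$ the pointwise bound $d_{k}a_{k}x_{k}x_{k+1}\leq\norm{d}_{\infty}\,a_{k}x_{k}x_{k+1}$ gives $w(T_{da})\leq\norm{d}_{\infty}w(T_{a})$, which is the claim for $M_{d}$.

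Next I would reduce $F_{d}$ and $B_{d}$ to the estimate just obtained. Expanding $\scalar{T_{F_{d}a}x,x}_{\ell^{2}}=\sum_{j\geq 1}d_{j}a_{j}x_{j+1}x_{j+2}^{*}$ and setting $y_{j}:=x_{j+1}$ yields $\scalar{T_{F_{d}a}x,x}_{\ell^{2}}=\scalar{T_{da}y,y}_{\ell^{2}}$ with $\norm{y}_{2}\leq\norm{x}_{2}$, so $\norm{F_{d}a}_{w}\leq w(T_{da})=\norm{M_{d}a}_{w}\leq\norm{d}_{\infty}\norm{a}_{w}$. For the backward shift one observes the identity $B_{d}a=d\cdot(Sa)$, where $S$ denotes the backward shift on $\ell^{\infty}$, which gives $\norm{B_{d}a}_{w}=\norm{M_{d}(Sa)}_{w}\leq\norm{d}_{\infty}\norm{Sa}_{w}$; and padding $x$ with a leading zero to form $y=(0,x_{1},x_{2},\dots)$ shows $\scalar{T_{Sa}x,x}_{\ell^{2}}=\scalar{T_{a}y,y}_{\ell^{2}}$ with $\norm{y}_{2}=\norm{x}_{2}$, hence $\norm{Sa}_{w}\leq\norm{a}_{w}$.

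For the matching lower bounds I would plug the coordinate sequences $e_{k}$ into each operator. A short calculation based on $\scalar{T_{e_{k}}x,x}_{\ell^{2}}=x_{k}x_{k+1}^{*}$ and Cauchy--Schwarz gives $\norm{e_{k}}_{w}=1/2$ for every $k$. Since $M_{d}e_{k}=d_{k}e_{k}$, $F_{d}e_{k}=d_{k}e_{k+1}$ and $B_{d}e_{k+1}=d_{k}e_{k}$, the ratio $\norm{Ra}_{w}/\norm{a}_{w}$ equals $\abs{d_{k}}$ in each case, so supping over $k$ yields $\norm{R}_{w}\geq\norm{d}_{\infty}$. The main obstacle I anticipate is the reduction to nonnegative weights in the $M_{d}$ estimate, which simultaneously uses Shields' absolute-value theorem and the phase-alignment argument for the variational supremum; once these two ingredients are in place, everything else is bookkeeping with inner products and shift operators on $\ell^{2}$.
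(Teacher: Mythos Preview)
Your proof is correct and follows essentially the same route as the paper: the same reduction of $F_{d}$ and $B_{d}$ to the $M_{d}$ estimate via the inner-product identities on $\ell^{2}$, and the same lower bound by testing on the unit sequences $e_{k}$ with $\norm{e_{k}}_{w}=\tfrac{1}{2}$. The only cosmetic difference is in the upper bound for $M_{d}$: the paper simply notes $\abs{M_{d}a}\leq\norm{d}_{\infty}\abs{a}$ and invokes the monotonicity of $\norm{\cdot}_{w}$ (already recorded in the introduction from \cite{WangWu2011}), whereas you unwind that monotonicity by hand through the phase-alignment argument on the variational formula; both arrive at the same inequality.
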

	\begin{proof}
		We compute the norm of each operator separately.
		\begin{enumerate}
			\item[(a)] 
			For $a\in\ell^{\infty}$ we have
			\begin{align*}
				|(M_{d}a)_{k}| = |d_{k}a_{k}| \leq \norm{d}_{\infty}\cdot |a_{k}|,
			\end{align*}
			so $|M_{d}a|\leq \norm{d}_{\infty}\cdot |a|$. Hence we find by monotonicity
			\begin{align*}
				\norm{M_{d}a}_{w} = \norm{|M_{d}a|}_{w} \leq \norm{d}_{\infty} \norm{a}_{w}.
			\end{align*}
			It remains to prove equality. To do so, fix $k\in\N$ and let $e_{k}\in\ell^{\infty}$ be the $k$-th standard unit vector in $\ell^{\infty}$. One can easily verify that $\norm{2e_{k}}_{w} = 1$. Further we conclude from inequality \eqref{ineq: estimate nr} with $n=2$ and $l=1$ that
			\begin{align*}
				\norm{M_{d}2e_{k}}_{w} & = \norm{2d_{k}e_{k}}_{w} = \abs{2d_{k}} \norm{e_{k}}_{w} \overset{\eqref{ineq: estimate nr}}{\geq } \frac{\abs{2d_{k}}}{2} = \abs{d_{k}}.
			\end{align*}
			Finally this yields together with the first inequality
			\begin{align*}
				\norm{d}_{\infty} = \sup_{k\in\N} \abs{d_{k}} \leq \sup_{k\in\N} \norm{M_{d}2e_{k}}_{w} \leq  \norm{d}_{\infty},
			\end{align*}
			which proves the claim for $M_{d}$.
			
			\item[(b)] We check the claim for $B_{d}$. But first, let $z\in W(T_{Ba})$. Then there exists $x\in\ell^{2}$ with $\norm{x}_{\ell^{2}}=1$ such that $z=\scalar{T_{Ba}x,x}_{\ell^{2}}$. By using $ST=Id$ and $S=T^{*}$ we find
			\begin{align*}
				z = \scalar{T_{Ba}x,x}_{\ell^{2}} & = \sum_{k=1}^{\infty} a_{k+1}x_{k}\overline{x_{k+1}} = \scalar{m_{a}Tx,x}_{\ell^{2}} = \scalar{Tm_{a}Tx,Tx}_{\ell^{2}} = \scalar{T_{a}Tx,Tx}_{\ell^{2}}
			\end{align*}
			Since $\norm{Tx}_{\ell^{2}} = \norm{x}_{\ell^{2}} = 1$ we have $z\in W(T_{a})$, so $\norm{Ba}_{w} \leq \norm{a}_{w}$. Hence we find
			\begin{align*}
				\norm{B_{d}a}_{w} = \norm{d\cdot Ba}_{w} = \norm{|d\cdot Ba|}_{w} = \norm{|d|\cdot |Ba|}_{w} \leq \norm{d}_{\infty}\cdot \norm{B|a|}_{w} \leq \norm{d}_{\infty} \norm{a}_{w}.
			\end{align*}
			Again one can prove the equality similar as for $M_{d}$.
			
			\item[(c)]  We check the claim for $F_{d}$. For $a\in\ell^{\infty}$ and $x\in\ell^{2}$ we have
			\begin{align*}
				m_{F_{d}a}x = \kla{0,d_{1}a_{1}x_{2},d_{2}a_{2}x_{3},...} = T_{da}Sx = T_{M_{d}a}Sx
			\end{align*}
			Hence we have
			\begin{align*}
				T_{F_{d}a} = Tm_{F_{d}a} = TT_{M_{d}a}S,   
			\end{align*}
			which yields due to $T^{*} = S$
			\begin{align}
				\label{eq:equal 1}
				\scalar{T_{F_{d}a}x,x}_{\ell^{2}} = \scalar{T_{M_{d}a}Sx,Sx}_{\ell^{2}},\quad \forall x\in\ell^{2}.
			\end{align}
			Thus we find
			\begin{align*}
				\abs{\scalar{T_{F_{d}a}x,x}} & \overset{\eqref{eq:equal 1}}{\leq} w\kla{T_{M_{d}a}} \norm{Sx}_{\ell^{2}}^{2} \leq \norm{M_{d}a}_{w}\norm{x}_{\ell^{2}}^{2} \leq \norm{d}_{\infty} \norm{a}_{w}\norm{x}_{\ell^{2}}^{2} ,
			\end{align*}
			from which we conclude that $\norm{F_{d}a}_{w}\leq \norm{d}_{\infty}\norm{a}_{w}$, so $\norm{F_{d}}_{w}\leq\norm{d}_{\infty}$. Again a similar argument as for $M_{d}$ shows equality.
			
		\end{enumerate}
		
	\end{proof}
	
	\section{The numerical radius of weighted shifts and applications to Banach limits}
	
	Recall that $\mathbb{1}=(1,1,...)\in\ell^{\infty}$ and note that for $a\in\ell^{\infty}$ the inequality $a \geq 0$ means that for all $k\in\N$ we have $a_{k}\geq 0$. Further we denote by $c$ the complex Banach space of all complex-valued bounded sequences. Banach limits are normalized, shift-invariant linear functionals, which continuously extend the limit functional from $c$ to $\ell^{\infty}$ with respect to the uniform norm, i.e.
	\begin{definition} \textbf{(Complex-valued Banach limits)}
		\label{def:banach limit}
		A linear functional $L:\ell^{\infty}\rightarrow \C$ is called \textit{(complex-valued) Banach limit}, if it satisfies the following conditions
		\begin{enumerate}
			\item $L(a) = L(Ba),\,\forall a\in\ell^{\infty}$.
			\item $L(a)\geq 0,\,\forall a\in\ell^{\infty},\,a\geq 0$.
			\item $L(\mathbb{1}) = 1$ and $\abs{L(a)} \leq \norm{a}_{\infty},\,\forall a\in\ell^{\infty}$.
			\item $L(a) = \lim\limits_{n\to\infty} a_{n},\,\forall a\in c$.
		\end{enumerate}
	\end{definition}
	Banach limits are continuous linear extensions of the limit operator from the Banach space of convergent sequences $(c,\norm{\cdot}_{\infty})$ onto the Banach space $(\ell^{\infty},\norm{\cdot}_{\infty})$ of bounded sequences. Yet, it is also possible to construct Banach limits as continuous linear extensions of the Cesàro limit operator on the closed subspace of Cesàro convergent sequences. In any case the existence of Banach limits is a consequence of the Hahn-Banach extension Theorem. For proofs we refer the reader to \cite[III. §7, p. 82]{conway} and \cite[Theorem, p. 309]{Sucheston}. Note that Sucheston proved in addition, that the maximal value of a real-valued Banach limit on a sequence $a\in \ell_{\R}^{\infty}$ is given by
	\begin{align*}
		M(a) := \lim_{n\to\infty} \sup_{j\in\N} \frac{1}{n}\sum_{k=1}^{n} a_{n+j-1}.
	\end{align*}
	
	Further the existence of real-valued Banach limits gives rise to complex-valued Banach limits. A classical proof of this fact can be again found in Conway's book \cite[III. §7, p. 83]{conway}. 
	
	The general theory of Banach limits has connections to the existence invariant measures and densities. For recent developments we refer to \cite{semenov/sukochev},\cite{Usachev} and \cite{sofi} as well as the references therein. Further we would like to stress that from a measure theoretical perspective, Banach limits are related to 'pure' finitely additive measures on the natural numbers, which are concentrated at $'\infty'$, i.e. they are not $\sigma$-additive and define measures lying in between the upper and lower density \cite[Example 2.1.3 (9)]{RaoRao1983}.
	
	Any complex-valued Banach limit $L:\ell^{\infty}\to\C$ clearly satisfies $\norm{L}=1$. From this we conclude as a first estimate for its norm with respect to $\norm{\cdot}_{w}$, that $\norm{L}_{w}\leq 2$ and one could expect that $\norm{L}_{w} = 2$, since we have already shown in Example \ref{ex: projection} that very simple functionals satisfy this equality. However from \eqref{eq: estimate any seq} and the fact that $M(a)$ is the largest value a real Banach limit can attain on a given sequence $a\in\ell_{\R}^{\infty}$, we immediately get the following:
	\begin{corollary}
		\label{cr: Banach limits}
		Any real-valued Banach limit $L:\ell_{\R}^{\infty}\to \R$ satisfies for all $a\in\ell_{\R}^{\infty}$ the norm estimate
		\begin{align*}
			\abs{L(a)}\leq \norm{a}_{w}.
		\end{align*}
	\end{corollary}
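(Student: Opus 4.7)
The plan is to reduce the claim in three lines to results already established, using only the defining properties of a real-valued Banach limit together with Sucheston's characterization and Theorem \ref{th: estimates}.

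First, I would exploit positivity. Fix $a \in \ell_\R^\infty$. Since $-\abs{a} \leq a \leq \abs{a}$ holds entrywise and $L$ maps nonnegative sequences to nonnegative reals (property 2), linearity gives $-L(\abs{a}) \leq L(a) \leq L(\abs{a})$, so
\begin{align*}
	\abs{L(a)} \leq L(\abs{a}).
\end{align*}
Thus the problem reduces to bounding $L$ on the nonnegative sequence $\abs{a}$.

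Next, I would invoke Sucheston's theorem, which identifies
\begin{align*}
	M(\abs{a}) = \lim_{n\to\infty}\sup_{j\in\N} \frac{1}{n}\sum_{k=1}^{n} \abs{a_{k+j-1}}
\end{align*}
with the supremum of $L'(\abs{a})$ over all real-valued Banach limits $L'$. In particular $L(\abs{a}) \leq M(\abs{a})$. Finally, Theorem \ref{th: estimates} supplies the key inequality $M(\abs{a}) \leq \norm{a}_{w}$, and chaining everything gives $\abs{L(a)} \leq L(\abs{a}) \leq M(\abs{a}) \leq \norm{a}_{w}$.

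There is no real obstacle here: all the hard work sits in Theorem \ref{th: estimates}, whose proof constructs explicit unit vectors in $\ell^{2}$ realizing Cesàro averages as inner products with $T_{\abs{a}}$, and in Sucheston's identification of $M(\abs{a})$ with the maximum of Banach limits. The only point worth verifying carefully is that $L(\abs{a}) \leq M(\abs{a})$ really requires nothing beyond positivity, shift-invariance and the normalization $L(\mathbb{1})=1$, so the estimate applies to every real-valued Banach limit, not merely to those arising via a specific Hahn–Banach extension procedure.
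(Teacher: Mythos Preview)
Your proof is correct and is essentially the same as the paper's: the paper derives the corollary ``immediately'' from Theorem \ref{th: estimates} (i.e.\ $M(\abs{a}) \leq \norm{a}_w$) combined with Sucheston's identification of $M(\abs{a})$ as the maximal Banach-limit value, and you have simply made the intermediate step $\abs{L(a)} \leq L(\abs{a})$ via positivity explicit. There is nothing to add or correct.
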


	Now adopting the before mentioned construction of complex Banach limits with respect to $\norm{\cdot}_{\infty}$ from Conway's book \cite[III. §7 p. 83]{conway}, one can obtain the same inequality for complex-valued Banach limits, by following the proof therein and applying the previous corollary. However, we present the full proof here for the convenience of the reader as we explicitly need the monotonicity of $\norm{\cdot}_{w}$ to conclude the proof.

	\begin{theorem} 
		\label{th: complex banach lim}
		There exists a linear functional $L:\ell^{\infty}\rightarrow \C$ such that
		\begin{enumerate}
			\item $L(a) = L(Ba),\,\forall a\in\ell^{\infty}$.
			\item $L(a)\geq 0,\,\forall a\in\ell^{\infty},\,a\geq 0$.
			\item $L(\mathbb{1}) = 1$ and $\abs{L(a)} \leq \norm{a}_{w},\,\forall a\in\ell^{\infty}$.
			\item $L(a) = \lim\limits_{n\to\infty} a_{n},\,\forall a\in c$.
		\end{enumerate}
	\end{theorem}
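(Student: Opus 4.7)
The plan is to follow the standard construction of a complex Banach limit from a real one, as in \cite[III.§7, p.~83]{conway}, but substitute the uniform-norm bound at the end by the tighter numerical-radius bound coming from Corollary \ref{cr: Banach limits}. Concretely, I would first invoke Hahn--Banach to produce a real-valued Banach limit $L_{\R}\colon \ell_{\R}^{\infty}\to\R$ (so that properties (1), (2), (4) for real sequences hold and $|L_{\R}(b)|\leq\norm{b}_{w}$ for every real $b$ by Corollary \ref{cr: Banach limits}), and then define
\begin{align*}
L\colon\ell^{\infty}\to\C,\qquad L(a):= L_{\R}\kla{\Re a}+ i\,L_{\R}\kla{\Im a}.
\end{align*}
Linearity, shift-invariance (1), positivity (2), normalization $L(\mathbb{1})=1$ and the extension property (4) are then immediate from the corresponding properties of $L_{\R}$ applied separately to real and imaginary parts.

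The only nontrivial step is the norm inequality $\abs{L(a)}\leq\norm{a}_{w}$ in (3). Here I would use the standard rotation trick: pick $\theta\in\R$ such that $e^{-i\theta}L(a)=\abs{L(a)}\geq 0$. By linearity $\abs{L(a)}= L(e^{-i\theta}a)$, and since the left-hand side is real we obtain
\begin{align*}
\abs{L(a)} = \Re\kla{L(e^{-i\theta}a)} = L_{\R}\kla{\Re(e^{-i\theta}a)}.
\end{align*}
Applying Corollary \ref{cr: Banach limits} to the real bounded sequence $\Re(e^{-i\theta}a)$ yields
\begin{align*}
\abs{L(a)} \leq \abs{L_{\R}\kla{\Re(e^{-i\theta}a)}} \leq \norm{\Re(e^{-i\theta}a)}_{w}.
\end{align*}

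The main (and only) obstacle is bridging from $\norm{\Re(e^{-i\theta}a)}_{w}$ back to $\norm{a}_{w}$. This is precisely where the monotonicity of $\norm{\cdot}_{w}$, established via Shields' unitary-equivalence theorem and exploited by Wang--Wu \cite[Prop.~2.5(a)]{WangWu2011}, enters: the pointwise estimate $\abs{\Re(e^{-i\theta}a_{k})}\leq\abs{e^{-i\theta}a_{k}}=\abs{a_{k}}$ gives $\abs{\Re(e^{-i\theta}a)}\leq\abs{a}$, hence by monotonicity
\begin{align*}
\norm{\Re(e^{-i\theta}a)}_{w} \leq \norm{a}_{w}.
\end{align*}
Chaining the two inequalities produces $\abs{L(a)}\leq\norm{a}_{w}$, which together with the already verified items (1), (2), (4) completes the proof. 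I expect no genuine difficulty beyond correctly invoking the monotonicity property; the rest is bookkeeping from Conway's classical argument adapted to our norm.
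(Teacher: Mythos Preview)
Your proof is correct, but it takes a different route from the paper's at the crucial norm-estimate step. The paper also starts by defining $L(a)=\tilde L(\mathrm{Re}\,a)+i\,\tilde L(\mathrm{Im}\,a)$ from a real Banach limit $\tilde L$, but instead of the rotation trick it first treats \emph{step sequences} $a=\sum_{k=1}^{m} a_{k}\chi_{k}$: there one computes directly $\abs{L(a)}\leq \sum_{k}\abs{a_{k}}\tilde L(\chi_{k})=\tilde L(\abs{a})$ and then applies Corollary~\ref{cr: Banach limits} to the real sequence $\abs{a}$ to obtain $\abs{L(a)}\leq \norm{a}_{w}$. The general case is then reached via the approximation Lemma~\ref{lm: monotonic approx}, which produces step sequences $a^{(n)}\to a$ with $\abs{a^{(n)}}\leq\abs{a}$; monotonicity of $\norm{\cdot}_{w}$ is invoked at this point to ensure $\norm{a^{(n)}}_{w}\leq\norm{a}_{w}$, and continuity of $L$ finishes the argument. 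Your polar-decomposition argument is shorter and bypasses the approximation lemma entirely: you reduce immediately to a single real sequence $\Re(e^{-i\theta}a)$, apply Corollary~\ref{cr: Banach limits} once, and use monotonicity via the pointwise bound $\abs{\Re(e^{-i\theta}a)}\leq\abs{a}$. Both proofs hinge on monotonicity of $\norm{\cdot}_{w}$, but yours isolates its role more cleanly and needs no density argument; the paper's version, on the other hand, exhibits the intermediate inequality $\abs{L(a)}\leq \tilde L(\abs{a})$ on a dense class, which is of some independent interest.
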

	\begin{proof}
		We can split each $a\in\ell^{\infty}$ into unique sequences $\mathrm{Re}(a) := \kla{\mathrm{Re}(a_{k})}\in\ell_{\R}^{\infty}$ and $\mathrm{Im}(a) := \kla{\mathrm{Im}(a_{k})}\in\ell_{\R}^{\infty}$ such that we have 
		$a=\mathrm{Re}(a)+i\,\mathrm{Im}(a)$. Let $\tilde{L}:\ell_{\R}^{\infty}\to \R$ be any real-valued Banach limit. Then the map $L:\ell^{\infty}\to\C$ which we define via 
		\begin{align*}
			L(a) := \tilde{L}(\mathrm{Re}(a)) + i\tilde{L}(\mathrm{Im}(a)),\quad a\in\ell^{\infty}
		\end{align*}
		is a linear map satisfying (1), (2), (4) and $L(\mathbb{1}) = 1$, so it remains to prove the norm estimate in (3). For this purpose let $E_{1},...,E_{m}\subseteq \N$ be pairwise disjoint subsets of $\N$ and $a_{1},...,a_{m}\in\C$. We denote by $\chi_{k}$ the characteristic sequences of $E_{k}$, $k=1,...,m$ and consider a sequence $a := \sum\limits_{k=1}^{m} a_{k}\chi_{k}\in\ell^{\infty}$. We will call such a sequence from now on \textit{step sequence} in analogy to step functions. Then we have
		\begin{align*}
			L(a) = \sum_{k=1}^{m} a_{k}\tilde{L}\kla{\chi_{k}},
		\end{align*}
		as $0\leq \chi_{k}\leq \mathbb{1}$ and so by definition and (b) we have $0\leq \tilde{L}(\chi_{k})=L(\chi_{k})\leq 1$. Assuming that $\norm{a}_{w}\leq 1$ we hence conclude from Corollary \ref{cr: Banach limits}
		\begin{align*}
			\abs{L(a)} & = \abs{\sum_{k=1}^{m} a_{k}\tilde{L}\kla{\chi_{k}}} \leq \sum_{k=1}^{m} \abs{a_{k}}\tilde{L}\kla{\chi_{k}} = \sum_{k=1}^{m} \tilde{L}\kla{\abs{a_{k}}\chi_{k}} = \tilde{L}\kla{\sum_{k=1}^{m} \abs{a_{k}}\chi_{k}} = \tilde{L}(\abs{a})   \leq \norm{a}_{w} \leq 1.
		\end{align*} 
		Next we apply the standard result (see Lemma \ref{lm: monotonic approx}), that for each $a\in\ell^{\infty}$ there exists a sequence of step sequences $\kla{a^{(n)}}\subseteq\ell^{\infty}$ such that $\norm{a-a^{(n)}}_{w}\to 0$ and $\abs{a^{(n)}}\leq \abs{a}$ for all $n\in\N$. Now given an element $a\in\ell^{\infty}$ with $\norm{a}_{w}\leq 1$ and such a sequence of step sequences, the monotonicity of the numerical radius of weighted shifts yields  $\norm{a^{(n)}}_{w}\leq 1$ for all $n\in\N$. From this we conclude together with the continuity of $L$ and the previous estimate, that (c) holds.   
	\end{proof}
	
	As all complex Banach limits arise from real-valued Banach limits, it follows, that 
	
	\begin{corollary}
		\label{cr: compl. Banach limits}
		For any complex-valued Banach limit $L:\ell^{\infty}\to \C$, we have $\norm{L}_{w} = 1$.
	\end{corollary}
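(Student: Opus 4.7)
The plan is to deduce the corollary directly from Theorem \ref{th: complex banach lim} together with the observation, hinted at in the sentence preceding the statement, that every complex-valued Banach limit actually arises from a real-valued one in the manner described in the proof of that theorem. The first step is therefore to verify this structural fact: given an arbitrary complex-valued Banach limit $L\colon\ell^{\infty}\to\C$, its restriction $\tilde L := L|_{\ell_{\R}^{\infty}}$ is a real-valued Banach limit. The only non-routine point is that $\tilde L$ is indeed real-valued, which will follow from positivity: for $a\in\ell_{\R}^{\infty}$ we write $a=a^{+}-a^{-}$ with $a^{\pm}\in\ell_{\R}^{\infty}$, $a^{\pm}\geq 0$, and invoke property (2) of Definition \ref{def:banach limit} to see that $L(a^{\pm})\geq 0$, so $L(a)=L(a^{+})-L(a^{-})\in\R$. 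The other Banach-limit properties of $\tilde L$ transfer from those of $L$ without any work.

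Next, I would use linearity and the decomposition $a=\mathrm{Re}(a)+i\,\mathrm{Im}(a)$ to write, for every $a\in\ell^{\infty}$,
\begin{align*}
L(a) \;=\; L(\mathrm{Re}(a)) + i\,L(\mathrm{Im}(a)) \;=\; \tilde L(\mathrm{Re}(a)) + i\,\tilde L(\mathrm{Im}(a)).
\end{align*}
This is precisely the complex Banach limit constructed from $\tilde L$ in the proof of Theorem \ref{th: complex banach lim}, and so the bound $\abs{L(a)}\leq \norm{a}_{w}$ established there applies verbatim. This yields $\norm{L}_{w}\leq 1$.

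For the reverse inequality, I would evaluate $L$ on the constant sequence $\mathbb{1}$. Since $T_{\mathbb{1}}$ is the unweighted forward shift with $\norm{T_{\mathbb{1}}}=1$, and since $\mathbb{1}$ is convergent with limit $1$, Corollary \ref{cr: estimates}(b) combined with \eqref{ineq: equiv of norms} forces $\norm{\mathbb{1}}_{w}=1$. Using $L(\mathbb{1})=1$, we obtain
\begin{align*}
\norm{L}_{w} \;\geq\; \frac{\abs{L(\mathbb{1})}}{\norm{\mathbb{1}}_{w}} \;=\; 1,
\end{align*}
and combining the two bounds finishes the argument.

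The main obstacle in this plan is the very first step: confirming that a complex-valued Banach limit always restricts to a real-valued Banach limit on $\ell_{\R}^{\infty}$, since nothing in Definition \ref{def:banach limit} forces $L$ to be $\R$-valued on real sequences a priori. Once positivity is invoked via the $a=a^{+}-a^{-}$ decomposition, the remainder of the proof is simply a recycling of the estimates already present in the proof of Theorem \ref{th: complex banach lim} together with an elementary evaluation at $\mathbb{1}$.
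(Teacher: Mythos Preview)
Your proposal is correct and follows essentially the same approach as the paper. The paper's ``proof'' consists only of the sentence ``As all complex Banach limits arise from real-valued Banach limits, it follows, that\ldots''; you have simply made this precise by showing via positivity that the restriction $\tilde L = L|_{\ell_{\R}^{\infty}}$ is a real-valued Banach limit, identifying $L$ with the functional built from $\tilde L$ in Theorem~\ref{th: complex banach lim}, and then supplying the easy lower bound via $\mathbb{1}$.
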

	
	Finally, in the last result of this section, we present based on inequality \eqref{eq: sharpened connection}, necessary and sufficient conditions for a sequence $a\in\ell^{\infty}$ with $\norm{a}_{\infty}\leq 1$ such that all Banach limits $L:\ell^{\infty}\to\C$ satisfy $\abs{L(a)}<1$. 
	\begin{theorem}
		\label{th: uniform BL-theorem}
		Let $a\in\ell^{\infty}$ with $\norm{a}_{\infty} = 1$. Then the following are equivalent
		\begin{enumerate}
			\item $r(T_{a})<1$
			\item $M(\abs{a})<1$
			\item $\norm{a}_{w}<1$
			\item there exist $C>0$ and $\lambda\in(0,1)$ such that
			\begin{align*}
				\prod_{k=1}^{n} \abs{a_{k+j-1}} \leq C\lambda^{n},\quad \forall n\in\N,\,j\in\N.
			\end{align*}
		\end{enumerate}
	\end{theorem}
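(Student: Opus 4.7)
The plan is to close the cycle $(3)\Rightarrow(2)\Rightarrow(1)\Leftrightarrow(4)\Rightarrow(3)$, with only the last arrow requiring substantive work. The implications $(3)\Rightarrow(2)\Rightarrow(1)$ are immediate from the chain $r(T_a)\leq M(\abs{a})\leq\norm{a}_w\leq\norm{a}_\infty=1$ of \eqref{eq: sharpened connection}, since a strict upper bound on a larger quantity forces a strict upper bound on the smaller ones. For $(1)\Leftrightarrow(4)$, I would invoke Kelley's formula \eqref{eq: spectral radius} together with Fekete's subadditive lemma, giving $r(T_a)=\inf_n\norm{T_a^n}^{1/n}$. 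If $r(T_a)<1$, pick $\lambda\in(r(T_a),1)$ and $N\in\N$ with $\norm{T_a^n}\leq\lambda^n$ for $n\geq N$; the trivial bound $\norm{T_a^n}\leq 1$ for $n<N$ is absorbed by setting $C:=\lambda^{-N}$, and using $\norm{T_a^n}=\sup_{j\in\N}\prod_{k=1}^n\abs{a_{k+j-1}}$ rewrites this as $(4)$. Conversely, $(4)$ gives $\norm{T_a^n}^{1/n}\leq C^{1/n}\lambda\to\lambda<1$, so $r(T_a)\leq\lambda<1$.

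For the nontrivial implication $(1)\Rightarrow(3)$ I would argue by contrapositive: assuming $\norm{a}_w=1$, I deduce $r(T_a)=1$. By Shields' unitary equivalence $T_a\cong T_{\abs{a}}$ and the rotational symmetry of the numerical range, I may take $a\geq 0$ and choose for each $\varepsilon>0$ a real nonnegative unit vector $x^{(\varepsilon)}\in\ell^2$ with $\scalar{T_a x^{(\varepsilon)},x^{(\varepsilon)}}\geq 1-\varepsilon$. The key identity
\[
\sum_k \tfrac{a_{k-1}+a_k}{2}\,x_k^2 \;-\; \sum_k a_k x_k x_{k+1}\;=\;\tfrac{1}{2}\sum_k a_k(x_k-x_{k+1})^2,\qquad a_0:=0,
\]
together with $\sum_k\tfrac{a_{k-1}+a_k}{2}x_k^2\leq 1$, yields the two dissipation estimates
\[
\sum_k\Bigl(1-\tfrac{a_{k-1}+a_k}{2}\Bigr)\bigl(x^{(\varepsilon)}_k\bigr)^2\leq\varepsilon,\qquad \sum_k a_k\bigl(x^{(\varepsilon)}_k-x^{(\varepsilon)}_{k+1}\bigr)^2\leq 2\varepsilon,
\]
which force $x^{(\varepsilon)}$ to concentrate on indices where both $a_{k-1}$ and $a_k$ are close to $1$ and to be approximately constant on each such stretch.

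Given arbitrary $N\in\N$ and $\delta>0$, I would then argue that, for $\varepsilon$ sufficiently small in terms of $N$ and $\delta$, there must exist a run of $N$ consecutive indices on which $a_k\geq 1-\delta$. Decomposing $\N$ into maximal runs $R_m$ of such indices (with lengths $L_m$) and approximating $x^{(\varepsilon)}$ by a step vector with value $c_m$ on $R_m$, the identity $\scalar{T_a x,x}\approx\sum_m(L_m-1)c_m^2 = 1-\sum_m c_m^2$ for such step vectors, combined with the hypothesis $\scalar{T_a x^{(\varepsilon)},x^{(\varepsilon)}}\geq 1-\varepsilon$, gives $\sum_m c_m^2\lesssim\varepsilon$; the normalization $\sum_m L_m c_m^2\approx 1$ then yields via a harmonic-mean inequality $\max_m L_m\geq(\sum_m L_m c_m^2)/(\sum_m c_m^2)\gtrsim 1/\varepsilon\geq N$. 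On a run of length $N$ with $a_k\geq 1-\delta$, the product $\prod_{k}\abs{a_k}\geq(1-\delta)^N$ yields $\norm{T_a^N}^{1/N}\geq 1-\delta$. Since $N$ and $\delta$ were arbitrary, $r(T_a)=\inf_N\norm{T_a^N}^{1/N}\geq 1-\delta$ for every $\delta>0$, and hence $r(T_a)=1$, contradicting $(1)$.

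The main obstacle will be the pigeonhole step in the previous paragraph: one must carefully propagate the two dissipation inequalities from $x^{(\varepsilon)}$ to its step-function approximation on good runs, controlling the cross-run and out-of-good-set contributions to $\scalar{T_ax^{(\varepsilon)},x^{(\varepsilon)}}$ uniformly in $\varepsilon$, and reducing to finitely many runs via an $\ell^2$ cutoff (which is available because $x^{(\varepsilon)}\in\ell^2$ but needs an explicit quantitative form). Once this balancing is made precise, the remainder is a routine $\varepsilon$-$\delta$ limiting argument.
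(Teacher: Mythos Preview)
Your treatment of $(3)\Rightarrow(2)\Rightarrow(1)$ and of $(1)\Leftrightarrow(4)$ matches the paper's proof essentially verbatim. The real divergence is in the implication $(1)\Rightarrow(3)$, where you embark on a substantial direct argument while the paper dispatches it in one line.

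The paper's argument is this: since $\norm{T_a}=\norm{a}_\infty=1$, the general fact that any $\lambda\in\overline{W(T)}$ with $\abs{\lambda}=\norm{T}$ lies in $\sigma_{ap}(T)\subseteq\sigma(T)$ (see e.g.\ \cite[Proof of Theorem 1.3-2]{GustafsonRao1997}) gives $\overline{W(T_a)}\cap\T\subseteq\sigma(T_a)\cap\T$, and the reverse inclusion holds because $\sigma(T_a)\subseteq\overline{W(T_a)}$. Both sets are closed disks about the origin, so $r(T_a)<1$ forces $\overline{W(T_a)}\cap\T=\emptyset$ and hence $\norm{a}_w<1$. No combinatorics, no dissipation estimates, no pigeonhole.

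Your contrapositive route---extracting from $\norm{a}_w=1$ the existence of arbitrarily long runs where $a_k\geq 1-\delta$ via near-maximizing vectors---is in principle sound, and the two dissipation inequalities you write down are correct. But the step you yourself flag as the ``main obstacle'' is genuine: passing from the two inequalities to a quantitative lower bound on the maximal good-run length requires controlling the mass of $x^{(\varepsilon)}$ at the \emph{boundaries} of good runs (where one of $a_{k-1},a_k$ may be small while the other is large) and the cross-run terms, and your step-vector heuristic $\scalar{T_a x,x}\approx\sum_m(L_m-1)c_m^2$ silently drops exactly these boundary contributions. This can be repaired with more bookkeeping, but it is a real gap as written, not just a routine $\varepsilon$--$\delta$ matter. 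Given that the classical boundary-point-of-the-numerical-range result bypasses all of this, you should simply invoke it.
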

	The proof requires the following auxiliary result (for a proof see \cite[Proof of Theorem 1.3-2, p. 10]{rao}).
	\begin{theorem}
		\label{th: numerical range}
		Let $H$ be a complex Hilbert space and $T\in\mathcal{B}(H)$. If $\lambda\in \overline{W(T)}$ with $\abs{\lambda} = \norm{T}$, then $\lambda\in \sigma_{ap}(T)$, where $\sigma_{ap}$ is the approximate point spectrum, i.e.
		\begin{align*}
			\sigma_{ap}(T) = \menge{\lambda\in\C\,:\,\exists \kla{x_{n}}\subseteq H, \norm{x_{n}} = 1 \mbox{ such that } \norm{(\lambda-T)x_{n}}\to 0}. 
		\end{align*}
	\end{theorem}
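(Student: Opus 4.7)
My plan is to use the definition of $\overline{W(T)}$ to extract a sequence of unit vectors whose Rayleigh quotients $\langle Tx_n, x_n\rangle$ approach $\lambda$, and then show directly that this same sequence already exhibits $\lambda$ as an approximate eigenvalue via a single inner-product expansion. The decisive role of the hypothesis $|\lambda| = \norm{T}$ is that it forces the factor $\norm{Tx_n}$ to be bounded above by $|\lambda|$, which is exactly what makes the squared-norm expansion collapse to zero.

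Concretely, since $\lambda \in \overline{W(T)}$, I choose a sequence $(x_n) \subseteq H$ with $\norm{x_n} = 1$ and $\langle Tx_n, x_n\rangle \to \lambda$. Expanding the inner product gives
\[
\norm{(T - \lambda I)x_n}^2 = \norm{Tx_n}^2 - 2\,\mathrm{Re}\bigl(\overline{\lambda}\,\langle Tx_n, x_n\rangle\bigr) + |\lambda|^2.
\]
The first summand satisfies $\norm{Tx_n}^2 \leq \norm{T}^2 = |\lambda|^2$ by the hypothesis, while the middle summand converges by continuity of the inner product, giving $\overline{\lambda}\,\langle Tx_n, x_n\rangle \to |\lambda|^2$. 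Combining these, the right-hand side is eventually bounded by $2|\lambda|^2 - 2\,\mathrm{Re}\bigl(\overline{\lambda}\,\langle Tx_n, x_n\rangle\bigr)$, which tends to $2|\lambda|^2 - 2|\lambda|^2 = 0$. Hence $\norm{(T - \lambda I)x_n} \to 0$, proving $\lambda \in \sigma_{ap}(T)$.

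Since the argument reduces to this one-line computation, there is essentially no obstacle. The only point meriting mention is the degenerate case $\lambda = 0$, where the hypothesis gives $\norm{T} = 0$, so $T = 0$ and every unit vector is a trivial approximate eigenvector for $0$. The structural feature driving the argument in the non-degenerate case is the forced saturation $\norm{Tx_n} \to \norm{T}$ implied by $\langle Tx_n, x_n\rangle \to \lambda$ together with $|\lambda| = \norm{T}$; without this norm equality, the expansion would leave a nonzero residual $\norm{T}^2 - |\lambda|^2$ and the conclusion would fail.
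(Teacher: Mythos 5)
Your proof is correct and is exactly the standard argument the paper relies on: the paper gives no proof of its own but cites Gustafson--Rao (Theorem 1.3-2), whose proof is the same squared-norm expansion $\norm{(T-\lambda)x_n}^2 = \norm{Tx_n}^2 - 2\,\mathrm{Re}\bigl(\overline{\lambda}\scalar{Tx_n,x_n}\bigr) + \abs{\lambda}^2 \leq 2\abs{\lambda}^2 - 2\,\mathrm{Re}\bigl(\overline{\lambda}\scalar{Tx_n,x_n}\bigr) \to 0$ using $\norm{Tx_n}\leq\norm{T}=\abs{\lambda}$. (The bound holds for all $n$, not just eventually, and your separate treatment of $\lambda=0$ is unnecessary but harmless.)
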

	
	\begin{proof}[Proof of Theorem 4.6]
		From \eqref{eq: sharpened connection}, we have for any $a\in\ell^{\infty}$
		\begin{align*}
			r(T_{a}) \leq M(\abs{a}) \leq \norm{a}_{w} \leq \norm{a}_{\infty}.
		\end{align*}
		Hence (3) $\Rightarrow$ (2) $\Rightarrow$ (1). Further, as $\norm{T_{a}} = \norm{a}_{\infty} = 1$, by Theorem \ref{th: numerical range}, we have $\overline{W(T_{a})}\cap\T\subseteq\sigma(T_{a})\cap\T$, where $\T$ denotes the unit circle in $\C$. The converse inclusion is true as $\sigma(T_{a}) \subseteq \overline{W(T_{a})}$, so $\overline{W(T_{a})}\cap\T = \sigma(T_{a})\cap\T$. As both the spectrum and the numerical range of weighted shifts are circular disks about the origin, it follows that (1) $\Rightarrow$ (3). Thus, we have (1) $\Leftrightarrow$ (2) $\Leftrightarrow$ (3). 
		\\It remains to show that (1) $\Leftrightarrow$ (4). If (4) holds, we have
		\begin{align*}
			r(T_{a}) = \lim_{n\to\infty} \sup_{j\in\N} \kla{\prod_{k=1}^{n}\abs{a_{k+j-1}}}^{\frac{1}{n}} \leq \lim_{n\to\infty} C^{\frac{1}{n}}\lambda = \lambda < 1.
		\end{align*}
		If (1) holds, we have
		\begin{align*}
			\lim_{n\to\infty} \norm{T_{a}^{n}}^{\frac{1}{n}} = r(T_{a}) < 1. 
		\end{align*}
		Hence, for all $\lambda \in (r(T_{a}),1)$ there exists some $N\in\N$ such that for all $n\in\N$ with $n\geq N$, we have
		\begin{align*}
			\norm{T_{a}^{n}}^{\frac{1}{n}} < \lambda.
		\end{align*}
		Choosing 
		\begin{align*}
			C:=\max\menge{1,\lambda^{-1}\norm{T_{a}},\lambda^{-2}\norm{T_{a}^{-2}},…,\lambda^{-N}\norm{T_{a}^{-N}}}+1,
		\end{align*}
		we find that
		\begin{align*}
			\norm{T_{a}^{n}} \leq C\lambda^{n},\quad \forall n\in\N.
		\end{align*}
		Now as 
		\begin{align*}
			\norm{T_{a}^{n}} = \sup_{j\in\N} \prod_{k=1}^{n} \abs{a_{k+j-1}},
		\end{align*}
		(4) follows.
	\end{proof}

	\begin{example}
		A sequence with $\norm{a}_{\infty} =1$, which does not satisfy any of the conditions of Theorem \ref{th: uniform BL-theorem} is given by $a=(1,0,1,1,0,1,1,1,0,...)$. This is the example we looked at in the second item of Remark \ref{rm: Cesaro}. For this sequence, we have
		\begin{align*}
			1 = \lim_{n\to\infty}\frac{1}{n}\sum_{k=1}^{n} \abs{a_{k}} \leq M(\abs{a}) \leq \norm{a}_{w} \leq 1, 
		\end{align*} 
		so $\norm{a}_{w} = M(\abs{a}) = 1$. Further, one can find for any $n\in\N$ a number $j\in\N$ such that
		\begin{align*}
			\prod_{k=1}^{n} \abs{a_{k+j-1}} = 1,
		\end{align*}
		since the sequence $a$ contains for any $n\in\N$ a block of length $n$ with all elements equal to 1. Hence, condition (4) of Theorem \ref{th: uniform BL-theorem} is not satisfied and we have $r(T_{a})=1$.
	\end{example}
	
	We denote by $\partial B_{1}^{\infty} := \menge{a\in\ell^{\infty}\,:\,\norm{a}_{\infty}=1}$ the standard unit sphere in $\ell^{\infty}$. In the last Theorem of this subsection we show that the set of sequences in $\partial B_{1}^{\infty}$, to which the last theorem applies, is dense in $\partial B_{1}^{\infty}$. Therefore set
	\begin{align*}
		D := \menge{a\in\ell^{\infty}\,:\,\exists C>0,\lambda\in\kla{0,1}\mbox{ with }\prod_{k=1}^{n}\abs{a_{k+j-1}}\leq C\lambda^{n},\,\forall n\in\N,j\in\N}
	\end{align*}
	and $D_{1} := D\cap \partial B_{1}^{\infty}$. Then we have
	\begin{theorem}
		$D_{1}$ is dense in $\partial B_{1}^{\infty}$.
	\end{theorem}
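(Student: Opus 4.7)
The plan is to show that, given any $a\in\partial B_{1}^{\infty}$ and any $\epsilon\in(0,1/2)$, one can construct an element $b'\in D_{1}$ with $\norm{a-b'}_{\infty}\leq 3\epsilon$ by a two-stage perturbation-and-normalization procedure. Since $\epsilon$ is arbitrary, this proves the density.

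First, I would use the hypothesis $\norm{a}_{\infty}=1$ to pick $N\in\N$ with $\max_{1\leq k\leq N}\abs{a_{k}}\geq 1-\epsilon$, which is possible since $\sup_{1\leq k\leq N}\abs{a_{k}}\nearrow 1$ as $N\to\infty$. Then define $b_{k}:=a_{k}$ for $k\leq N$ and $b_{k}:=(1-2\epsilon)a_{k}$ for $k>N$. It is immediate that $\norm{a-b}_{\infty}\leq 2\epsilon\sup_{k>N}\abs{a_{k}}\leq 2\epsilon$, and the choice of $N$ together with $\abs{b_{k}}\leq 1$ for all $k$ gives $1-\epsilon\leq \norm{b}_{\infty}\leq 1$.

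Next, I would verify that $b\in D$. For any $j,n\in\N$, at most $N$ of the indices $k+j-1$ with $1\leq k\leq n$ satisfy $k+j-1\leq N$; on those ``exceptional'' indices one only has $\abs{b_{k+j-1}}\leq 1$, while on every other index $\abs{b_{k+j-1}}\leq 1-2\epsilon$. Hence $\prod_{k=1}^{n}\abs{b_{k+j-1}}\leq (1-2\epsilon)^{\max\menge{0,n-N}}\leq C(1-2\epsilon)^{n}$ with $C:=(1-2\epsilon)^{-N}$, so $b\in D$ with constants $(C,1-2\epsilon)$. To force the uniform norm equal to $1$, set $b':=b/\norm{b}_{\infty}$. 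The product estimate then rescales to
\[
\prod_{k=1}^{n}\abs{b'_{k+j-1}}\leq C\kla{\frac{1-2\epsilon}{\norm{b}_{\infty}}}^{n},
\]
and since $\norm{b}_{\infty}\geq 1-\epsilon>1-2\epsilon$, the new ratio $\lambda':=(1-2\epsilon)/\norm{b}_{\infty}$ lies strictly in $(0,1)$, whence $b'\in D_{1}$. Finally, by the triangle inequality together with $\norm{b-b'}_{\infty}=1-\norm{b}_{\infty}\leq \epsilon$, one obtains $\norm{a-b'}_{\infty}\leq \norm{a-b}_{\infty}+\norm{b-b'}_{\infty}\leq 3\epsilon$.

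The main technical point is the interplay between the tail-truncation factor and the subsequent rescaling: if one truncated the tail by the factor $1-\epsilon$ instead of $1-2\epsilon$, the normalized decay rate could equal $1$ and $b'$ could fail to belong to $D$. Using the slightly smaller factor $1-2\epsilon$, while only requiring $\norm{b}_{\infty}\geq 1-\epsilon$, creates the strict gap $\lambda'<1$ needed for $b'\in D_{1}$, and this is the only subtle point of the argument; everything else is a direct computation.
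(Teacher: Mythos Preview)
Your argument is correct. The two-stage scheme---scale the tail by $1-2\epsilon$, then renormalize---works, and your observation that the strict gap $\lambda'=(1-2\epsilon)/\norm{b}_{\infty}<1$ survives the normalization because $\norm{b}_{\infty}\geq 1-\epsilon$ is exactly the point that makes the construction go through.

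The paper takes a slightly different route: instead of keeping an initial block and scaling the tail, it selects a subsequence $(k_{l})$ with $\abs{a_{k_{l}}}\to 1$ and gaps $k_{l+1}-k_{l}\geq 2$, keeps those entries unchanged, and scales \emph{all other} entries by $(m-1)/m$. Because the retained subsequence already has supremum equal to $1$, the resulting sequence lies on $\partial B_{1}^{\infty}$ without any renormalization, and the gap condition guarantees that in every window of length $n$ at least $\lfloor n/2\rfloor$ entries carry the factor $(m-1)/m$, giving $\lambda=\sqrt{(m-1)/m}$. Your approach trades that combinatorial gap condition for an extra normalization step; the paper's approach trades the normalization for the subsequence selection. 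Both are elementary, but yours is arguably more transparent since the product estimate $\prod_{k=1}^{n}\abs{b_{k+j-1}}\leq (1-2\epsilon)^{n-N}$ is immediate, whereas in the paper one has to argue about the density of the retained indices in an arbitrary window.
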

	\begin{proof}
		In order to prove the second statement we fix $a\in \partial B_{1}^{\infty}$. Now there exists a subsequence $\kla{a_{k_{l}}}\subseteq \kla{a_{k}} = a$ such that $\abs{a_{k_{l}}}\to 1$. W.l.o.g. we may assume that $k_{l+1}-k_{l}\geq 2$ for all $l\in\N$ (otherwise we may pass to a subsequence). We define a sequence $(b_{m})\subseteq \ell^{\infty}$ by setting for all $m\in\N$ and $k\in\N$
		\begin{align*}
			(b_{m})_{k} := \begin{cases}
				a_{k}, & \mbox{if } k=k_{l}\mbox{ for some } l\in\N, \\
				\frac{m-1}{m}\cdot a_{k}, & \mbox{else.}
			\end{cases} 
		\end{align*}
		It is not hard to check that $\lim\limits_{m\to\infty}\norm{b_{m}-a}_{\infty} = 0$ and for all $m\in\N$ one has $b_{m}\in D_{1}\cap \partial B_{1}^{\infty}$ with $\lambda_{m} = \sqrt{\frac{m-1}{m}}$ and $C_{m}\geq\sqrt{\frac{m}{m-1}}$.
	\end{proof}
	
	\begin{remark}
		Note that as a consequence of Sucheston's Theorem \cite[Theorem, p. 309]{Sucheston} and Theorem \ref{th: uniform BL-theorem}, the set $D_{1}$ is the largest set in $\partial B_{1}^{\infty}$ such that for all Banach limits $L:\ell^{\infty}\to\C$, we have
		\begin{align*}
			\abs{L(a)}<1, \quad \forall a\in D_{1}. 
		\end{align*}
	\end{remark}

	\section{Some sufficient conditions for a norm on $\ell^{\infty}$ to satisfy Corollary \ref{cr: compl. Banach limits}}
	
	In this section we summarize conditions for an arbitrary norm on $\ell^{\infty}$, which ensure that the operator norm of any complex Banach limit $L:\ell^{\infty}\to\C$ with respect to the given norm is equal to 1. By Sucheston's Theorem it is clear that a necessary condition for a norm $\norm{\cdot}:\ell^{\infty}\to\left[0,\infty\right)$ to satisfy the statements of Corollary \ref{cr: Banach limits} and Corollary \ref{cr: compl. Banach limits} is given by  
	
	\begin{enumerate}
		\item[(A1)] 	$\lim\limits_{n\to\infty}\sup\limits_{j\in\N}  \frac{1}{n} \sum\limits_{k=1}^{n} \abs{a_{k+j-1}}\leq \norm{a}, \quad \forall a\in\ell^{\infty}.$
	\end{enumerate}
	Further, in order to obtain the same topology as the topology of the uniform norm, we require that
	\begin{enumerate}
		\item[(A2)] There exists a constant $C\geq1$ such that for all $a\in\ell^{\infty}$, we have
		\begin{align*}
			\quad \norm{a}\leq \norm{a}_{\infty} \leq C\norm{a}.
		\end{align*}
	\end{enumerate}	
	Hence (A2) is just saying that $\norm{\cdot}$ is equivalent to the uniform norm on $\ell^{\infty}$, with the equivalence constants $C\geq1$ and $c=1$. The latter ensures, that all Banach limits satisfy $\norm{L}=1$. 
	
	Having these two conditions at hand we readily conclude the following result by adopting the proof of Theorem \ref{th: complex banach lim}:
	\begin{theorem}
		Let $\norm{\cdot}:\ell^{\infty}\to[0,\infty)$ be a monotonic norm satisfying \emph{(A1)} and \emph{(A2)} and $a\in\ell^{\infty}$. Then all complex-valued Banach limits $L:\ell^{\infty}\rightarrow \C$ satisfy for all $a\in\ell^{\infty}$ the norm estimate
		\begin{align*}
			\abs{L(a)} \leq \norm{a}.
		\end{align*}
	\end{theorem}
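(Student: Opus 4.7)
The plan is to adapt the proof of Theorem \ref{th: complex banach lim} to the abstract setting, tracking where the specific properties of $\norm{\cdot}_{w}$ enter and verifying that they are all supplied by \emph{(A1)}, \emph{(A2)}, and monotonicity.

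First I would reprove the real-valued analogue of Corollary \ref{cr: Banach limits} for the abstract norm. Fix a real-valued Banach limit $\tilde{L}\colon\ell_{\R}^{\infty}\to\R$ and $a\in\ell_{\R}^{\infty}$. Since $-|a|\leq a\leq |a|$ and $\tilde{L}$ is positive and linear, $|\tilde{L}(a)|\leq \tilde{L}(|a|)$. Sucheston's theorem yields $\tilde{L}(|a|)\leq M(|a|)$, and applying \emph{(A1)} to $|a|$ together with the fact that monotonicity on $\ell^{\infty}$ implies absoluteness (so $\norm{|a|}=\norm{a}$) gives $|\tilde{L}(a)|\leq \norm{a}$.

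Next I would lift this to complex-valued Banach limits following the strategy of Theorem \ref{th: complex banach lim}. Every complex Banach limit $L$ has the form $L(a)=\tilde{L}(\mathrm{Re}(a))+i\tilde{L}(\mathrm{Im}(a))$ for a real-valued Banach limit $\tilde{L}$. For a step sequence $a=\sum_{k=1}^{m} a_{k}\chi_{k}$ with pairwise disjoint indicator sequences $\chi_{k}$, the decomposition $L(a)=\sum_{k=1}^{m} a_{k}\tilde{L}(\chi_{k})$ combined with the triangle inequality and positivity of $\tilde{L}$ yields
\begin{align*}
|L(a)| \leq \sum_{k=1}^{m} |a_{k}|\tilde{L}(\chi_{k}) = \tilde{L}\kla{\sum_{k=1}^{m} |a_{k}|\chi_{k}} = \tilde{L}(|a|) \leq \norm{a},
\end{align*}
the last inequality being the real-valued estimate just established.

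Finally I would extend to arbitrary $a\in\ell^{\infty}$ by density. Standard quantization (the construction underlying Lemma \ref{lm: monotonic approx}) produces step sequences $a^{(n)}$ with $|a^{(n)}|\leq |a|$ and $\norm{a-a^{(n)}}_{\infty}\to 0$. By \emph{(A2)}, $\norm{a-a^{(n)}}\leq \norm{a-a^{(n)}}_{\infty}\to 0$; monotonicity gives $\norm{a^{(n)}}\leq \norm{a}$; and the $\norm{\cdot}_{\infty}$-continuity of $L$ together with the equivalence of $\norm{\cdot}$ and $\norm{\cdot}_{\infty}$ gives $L(a^{(n)})\to L(a)$. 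Passing to the limit in $|L(a^{(n)})|\leq \norm{a^{(n)}}\leq \norm{a}$ yields the claim. The argument is essentially a bookkeeping exercise with no deep obstacle: \emph{(A1)} combined with Sucheston's theorem plays the role of Theorem \ref{th: estimates} in Theorem \ref{th: complex banach lim}, and \emph{(A2)} ensures that both the step-sequence density and the continuity of $L$ transfer seamlessly from $\norm{\cdot}_{\infty}$ to the abstract norm $\norm{\cdot}$.
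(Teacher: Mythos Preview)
Your proposal is correct and follows essentially the same route as the paper, which simply instructs one to adopt the proof of Theorem~\ref{th: complex banach lim}: you use Sucheston's theorem together with (A1) in place of Theorem~\ref{th: estimates} to obtain the real-valued bound, then reproduce the step-sequence computation and density argument verbatim, invoking monotonicity and (A2) exactly where the original proof used the corresponding properties of $\norm{\cdot}_{w}$. The only cosmetic difference is that you bound $|L(a)|$ by $\norm{a}$ directly rather than first normalizing to $\norm{a}\leq 1$, which is harmless.
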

	
	Next we present a sufficient condition for a norm on $\ell^{\infty}$ to satisfy (A1).
	\begin{definition}
		We say a norm $\norm{\cdot}:\ell^{\infty}\to\left[0,\infty\right)$ satisfies a \textit{shift-invariant arithmetic mean type estimate}, which we abbreviate by (S-IAM), if there exist $l\in\N$ and $p\in\Z$ such that for all $n\in\N$ with $n\geq l+\abs{p}$ and $j\in\N$ the inequality
		\begin{align*}
			\sup_{n\in\N} \frac{1}{n}\abs{\sum_{k=l}^{n-p}a_{k+j-1}}\leq \norm{a},\quad \forall a\in\ell^{\infty}.
		\end{align*}
	\end{definition}
	
	\begin{example}
		Of course the uniform norm satisfies (S-IAM) with $l=1$ and $p=0$ and $\norm{\cdot}_{w}$ satisfies (S-IAM) with $l=1$ and $p=1$ as we have shown in \eqref{ineq: estimate nr}. Further, it can be shown (which we do not prove here), that for any $m\in\N$ the map $\norm{\cdot}_{m}\colon\ell^{\infty}\to[0,\infty)$ given by
		\begin{align*}
			\norm{a}_{m} := w(T^{m-1}T_{a}),\quad a\in\ell^{\infty},
		\end{align*} 
		is a monotonic norm on $\ell^{\infty}$, that satisfies (A2). Moreover for all $n\in\N$ with $n\geq m$ and each $j\in\N$, we have
		\begin{align*}
			\frac{1}{n}\sum_{k=1}^{n-m}|a_{k+j-1}| \leq \norm{a}_{m},\quad \frac{1}{n}\sum_{k=m+1}^{n} |a_{k+j-1}|\leq \norm{a}_{m}.
		\end{align*}
		So each $\norm{\cdot}_{m}$ satisfies (S-IAM) with $l=0$ and $p=-m$ or $l=m+1$ and $p=0$. Hence there are other examples of norms satisfying the  conditions (S-IAM) and (A2).
	\end{example}
	
	Finally we show that an absolute norm (hence also a monotonic norm) satisfying (S-IAM) satisfies (A1). 
	
	\begin{theorem}
		Let $\norm{\cdot}:\ell^{\infty}\to[0,\infty)$ be an absolute norm satisfying \emph{(S-IAM)}. Then for any $a\in\ell^{\infty}$, we have
		\begin{align*}
			\lim_{n\to\infty}\sup_{j\in\N}  \frac{1}{n} \sum_{k=1}^{n} \abs{a_{k+j-1}} \leq \norm{a}.
		\end{align*}
		Further items (a) - (d) of Corollary \ref{cr: estimates} hold similarly for the norm $\norm{\cdot}$.
	\end{theorem}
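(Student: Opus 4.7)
The plan is to mimic the proof of Theorem \ref{th: estimates}, letting (S-IAM) play the role of inequality \eqref{ineq: estimate nr} and using absoluteness of $\norm{\cdot}$ to reduce to non-negative real sequences.

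First I would invoke $\norm{a} = \norm{\abs{a}}$, which holds because $\norm{\cdot}$ is absolute, to reduce to the case $a \geq 0$. With $l \in \N$ and $p \in \Z$ the parameters provided by (S-IAM), non-negativity lets us drop the absolute values inside the sum appearing in the (S-IAM) inequality, yielding
\begin{align*}
\frac{1}{n} \sum_{k=l}^{n-p} a_{k+j-1} \leq \norm{a}
\end{align*}
for all $n \geq l + \abs{p}$ and $j \in \N$.

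Next I would compare this truncated sum with the Cesàro-type average appearing in the target inequality. A short case distinction on the sign of $p$ shows that the two sums differ only by terms lying near the boundary: at most $l-1$ missing terms at the bottom plus, if $p>0$, at most $p$ missing terms at the top (while for $p<0$ the truncated sum contains extra non-negative terms, which only helps). Each missing term is bounded by $\norm{a}_{\infty}$, so
\begin{align*}
\frac{1}{n} \sum_{k=1}^{n} a_{k+j-1} \leq \frac{1}{n} \sum_{k=l}^{n-p} a_{k+j-1} + \frac{l-1+\abs{p}}{n}\norm{a}_{\infty} \leq \norm{a} + \frac{l-1+\abs{p}}{n}\norm{a}_{\infty}.
\end{align*}
Taking $\sup_{j\in\N}$ and then $n\to\infty$ gives the claim, since the correction term is uniform in $j$ and vanishes, while the limit on the left-hand side exists by Sucheston's Theorem \cite[p. 309]{Sucheston}, exactly as in the proof of Theorem \ref{th: estimates}.

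For the analogues of items (a)-(d) of Corollary \ref{cr: estimates}, the arguments should transfer verbatim: each is deduced from the main inequality by specialising to Cesàro convergent, convergent, periodic, or periodic-plus-convergent sequences, exactly as in the proof of Corollary \ref{cr: estimates}. I do not anticipate a genuine obstacle; the only bookkeeping point is the uniform treatment of the boundary terms and the sign of $p$ in the step above, which is what the hypothesis $n \geq l + \abs{p}$ in (S-IAM) is designed to handle.
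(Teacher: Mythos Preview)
Your proposal is correct and follows essentially the same route as the paper's proof: both apply (S-IAM) to the non-negative sequence $\abs{a}$ (you via the explicit reduction $a\mapsto\abs{a}$ using absoluteness, the paper by working with $\abs{a}$ throughout and invoking $\norm{\abs{a}}=\norm{a}$ at the end), bound the discrepancy between $\sum_{k=1}^{n}$ and $\sum_{k=l}^{n-p}$ by at most $l-1+\abs{p}$ boundary terms each of size $\leq\norm{a}_{\infty}$, and then pass to the limit using Sucheston's convergence result. The treatment of items (a)--(d) is likewise identical.
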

	\begin{proof}
		As in the proof of Theorem \ref{th: estimates} we use the strategy of Sucheston \cite{Sucheston}. Consider again for any $n\in\N$ the linear map $C_{n}:\ell^{\infty}\to\ell^{\infty}$ given by
		\begin{align*}
			C_{n}a := \kla{\frac{1}{n} \sum_{k=1}^{n} a_{k+j-1} }_{j\in\N},\quad a\in\ell^{\infty}.
		\end{align*}
		and recall that the sequence $\kla{c_{n}}$ given by
		\begin{align*}
			c_{n}:=\norm{C_{n}\abs{a}}_{\infty}
		\end{align*}
		is convergent. Further for all $n\in\N$ with $n\geq l+\abs{p}$, where $p\in\Z$ and $l\in\N$ come from (A1), and for all $j\in\N$, we have 
		\begin{align*}
			\frac{1}{n} \abs{\sum_{k=1}^{n} a_{k+j-1}} \overset{\mbox{\tiny(S-IAM)}}{\leq} \norm{\abs{a}} + \frac{1}{n} \kla{\sum_{k=1}^{l-1} \abs{a_{k+j-1}} + \sum_{k=\min\menge{n,n-p+1}}^{\max\menge{n,n-p+1}} \abs{a_{k+j-1}}}\leq \norm{\abs{a}} + \frac{\norm{a}_{\infty}\kla{l-1+\abs{p}}}{n},   
		\end{align*}
		from which we obtain
		\begin{align*}
			\lim_{n\to\infty}\sup_{j\in\N}  \frac{1}{n} \sum_{k=1}^{n} \abs{a_{k+j-1}} = \lim_{n\to\infty} c_{n} \leq \norm{\abs{a}} = \norm{a}.
		\end{align*}
	\end{proof}

	\appendix
	\section{}
	
	The following Lemma is a standard result for functions $f\in L^{\infty}(\Omega,\mu)$ on a measure space $(\Omega,\mathcal{A},\mu)$.  However, we only state it for sequences, i.e. in the case where $\Omega = \N$, $\mathcal{A}=\mathcal{P}(\N)$ is the power set of $\N$ and $\mu = \abs{\cdot}$ is the counting measure. 
	
	\begin{lemma}
		\label{lm: monotonic approx}
		Let $\norm{\cdot}:\ell^{\infty}\to[0,\infty)$ be norm satisfying \emph{(A2)} and $a\in\ell^{\infty}$. Then there exists a sequence $a^{(n)}\in\ell^{\infty}$ of step sequences such that $\norm{a^{(n)}-a}\to0$ and $\abs{a^{(n)}}\leq \abs{a}$, i.e.
		\begin{align*}
			\abs{a_{k}^{(n)}}\leq \abs{a_{k}},\quad \forall k\in\N.
		\end{align*}
	\end{lemma}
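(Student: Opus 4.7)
The plan is to reduce the claim to uniform approximation of $a$ by step sequences, since condition (A2) gives $\norm{a}\leq \norm{a}_{\infty}$, so any sequence converging in the uniform norm also converges in $\norm{\cdot}$. The only subtle point is enforcing the pointwise domination $\abs{a^{(n)}}\leq\abs{a}$ simultaneously with uniform closeness, which requires picking representatives of each level set with minimal modulus.

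First, I would observe that $a\in\ell^{\infty}$ takes its values in the compact disc $\overline{B(0,\norm{a}_{\infty})}\subseteq \C$. For each $n\in\N$, I would partition this disc into finitely many pairwise disjoint Borel sets $Q_{1}^{(n)},\dots,Q_{N_{n}}^{(n)}$ of diameter at most $\frac{1}{n}$, for instance by intersecting the disc with a grid of half-open squares of side length $\frac{1}{n\sqrt{2}}$. Setting $E_{j}^{(n)}:=a^{-1}(Q_{j}^{(n)})\subseteq\N$ gives a partition of $\N$ into finitely many pairwise disjoint sets whose characteristic sequences $\chi_{j}^{(n)}$ will be the building blocks of the step sequences.

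Next, for each $j$ I would choose a representative value $v_{j}^{(n)}\in \overline{Q_{j}^{(n)}}$ of minimal modulus (which exists since $\overline{Q_{j}^{(n)}}$ is compact), taking $v_{j}^{(n)}=0$ whenever $0\in\overline{Q_{j}^{(n)}}$. Define the step sequence $a^{(n)}:=\sum_{j=1}^{N_{n}}v_{j}^{(n)}\chi_{j}^{(n)}\in\ell^{\infty}$. By the very choice of $v_{j}^{(n)}$, for any $k\in E_{j}^{(n)}$ one has $a_{k}\in Q_{j}^{(n)}\subseteq \overline{Q_{j}^{(n)}}$ and thus $\abs{a_{k}^{(n)}}=\abs{v_{j}^{(n)}}\leq \abs{a_{k}}$, giving the required pointwise domination $\abs{a^{(n)}}\leq\abs{a}$.

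Finally, uniform convergence follows from the diameter bound: for every $k\in\N$, letting $j$ denote the unique index with $k\in E_{j}^{(n)}$,
\begin{align*}
\abs{a_{k}-a_{k}^{(n)}} = \abs{a_{k}-v_{j}^{(n)}} \leq \mathrm{diam}\bigl(\overline{Q_{j}^{(n)}}\bigr) \leq \tfrac{1}{n},
\end{align*}
so $\norm{a-a^{(n)}}_{\infty}\leq \tfrac{1}{n}$. Invoking (A2), $\norm{a-a^{(n)}}\leq \norm{a-a^{(n)}}_{\infty}\to 0$, which completes the proof. The main obstacle here is exactly the combination of the two requirements — the obvious uniform approximation by any representatives suffices for $\norm{\cdot}$-convergence, but to also enforce $\abs{a^{(n)}}\leq\abs{a}$ one has to select representatives of smallest modulus in each piece of the partition; with this careful choice the argument becomes routine.
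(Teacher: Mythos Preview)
The paper does not actually prove this lemma: it is relegated to the appendix with the remark that it is a standard result for $L^{\infty}(\Omega,\mu)$, specialized to the counting measure on $\N$, and no argument is supplied. So there is no ``paper's proof'' to compare against.

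Your argument is correct and is precisely the standard one alluded to. The reduction to uniform approximation via (A2) is exactly right, and the only nontrivial point --- simultaneously enforcing $\abs{a^{(n)}}\leq\abs{a}$ --- is handled cleanly by your choice of a minimal-modulus representative $v_{j}^{(n)}\in\overline{Q_{j}^{(n)}}$. Since $a_{k}\in Q_{j}^{(n)}\subseteq\overline{Q_{j}^{(n)}}$, minimality gives the domination, and the diameter bound (unchanged under closure) gives the uniform estimate. Nothing is missing.
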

	
\end{document}